\documentclass[a4paper]{article}

\usepackage{amssymb, amsmath, amscd, amsfonts, amsthm}

\usepackage{subcaption}
\usepackage{enumerate}
\usepackage{color}
\usepackage{microtype}

\usepackage{colonequals}
\newcommand{\defeq}{\colonequals}

\usepackage[outline]{contour}
\contourlength{0.1em}

\usepackage{algorithm}
\usepackage{algpseudocode}

\algnewcommand{\IIf}[1]{\State\algorithmicif\ #1\ \algorithmicthen}
\algnewcommand{\EElse}{\algorithmicelse\ }
\algnewcommand{\EndIIf}{\unskip\ }
\algnewcommand{\algorithmicforeach}{\textbf{for each}}
\algdef{S}[FOR]{ForEach}[1]{\algorithmicforeach\ #1\ \algorithmicdo}

\usepackage{tikz}
\usetikzlibrary{calc}
\usetikzlibrary{patterns}
\usetikzlibrary{decorations.pathreplacing}
\usetikzlibrary{decorations.pathmorphing}
\usetikzlibrary{decorations.markings}
\usetikzlibrary{shapes.geometric}

\tikzset{dot/.style={draw,shape=circle,fill=black,scale=0.4}}

\usepackage[hidelinks, pagebackref, pdftex]{hyperref}

\renewcommand*{\backref}[1]{}
\renewcommand*{\backrefalt}[4]{
  \ifcase #1 %
   [No citations.]%
  \else
   [#2]%
  \fi
}

\theoremstyle{plain}
\numberwithin{equation}{section}
\newtheorem{theorem}[equation]{Theorem}
\newtheorem*{theorem*}{Theorem}
\newtheorem{corollary}[equation]{Corollary}
\newtheorem*{corollary*}{Corollary}
\newtheorem{lemma}[equation]{Lemma}
\newtheorem*{lemma*}{Lemma}
\newtheorem{proposition}[equation]{Proposition}
\newtheorem*{proposition*}{Proposition}

\theoremstyle{definition}
\newtheorem{definition}[equation]{Definition}
\newtheorem{remark}[equation]{Remark}
\newtheorem*{definition*}{Definition}

\newtheorem*{acknowledgements}{Acknowledgements}

\newtheoremstyle{dotless}{}{}{}{}{\bfseries}{}{ }{}
\theoremstyle{dotless}

\newcommand{\calC}{\mathcal{C}}
\newcommand{\calF}{\mathcal{F}}

\newcommand{\calT}{\mathcal{T}}
\newcommand{\HH}{\mathbb{H}}
\newcommand{\RR}{\mathbb{R}}
\newcommand{\carriedby}{\prec}

\newcommand{\PTIME}{\textnormal{\textbf{P}}}

\newcommand{\dC}{d_{\calC(S)}}  
\DeclareMathOperator{\Mod}{Mod}  
\DeclareMathOperator{\intersection}{\iota}
\DeclareMathOperator{\canonical}{\sigma}
\DeclareMathOperator{\poly}{poly}

\DeclareMathOperator{\diam}{diam}
\DeclareMathOperator{\short}{short}
\DeclareMathOperator{\translationlength}{\ell}

\hyphenation{multi-curve}

\title{Polynomial-time algorithms for the curve graph}
\author{Mark C. Bell\footnote{Department of Mathematics, University of Illinois: \texttt{mcbell@illinois.edu}} \and Richard C. H. Webb\footnote{DPMMS, Centre for Mathematical Sciences, University of Cambridge: \texttt{rchw2@cam.ac.uk}}}

\begin{document}

\maketitle

\begin{abstract}
We describe a polynomial-time algorithm to compute a (tight) geodesic between two curves in the curve graph.
As well as enabling us to compute the distance between a pair of curves, this has several applications to mapping classes.
For example, we can use these geodesics to compute:
\begin{itemize}
\item the asymptotic translation length,
\item the Nielsen--Thurston type, and
\item the canonical curve system
\end{itemize}
of a mapping class in polynomial time in its word length.
\end{abstract}

\keywords{curve graph, geodesics, polynomial-time, mapping class group, asymptotic translation length, Nielsen--Thurston classification, canonical curve system.}

\ccode{57M20, 05C12, 37E30}

\section{Introduction}

In the mid 1970s, Thurston showed that any mapping class is periodic, pseudo-Anosov or reducible \cite{Bers, CassonBleiler, FarbMargalit, FLP, HandelThurston, Thurston}.
This theorem is a milestone in the study of the mapping class group $\Mod(S)$ and is known as the Nielsen--Thurston classification.
Thurston also gave a beautiful rephrasing of this from the point of view of $3$--manifolds, namely that the corresponding mapping torus has geometry modelled on $\HH^2 \times \RR$, modelled on $\HH^3$ or has a non-trivial JSJ decomposition \cite{ThurstonHype}.
This is an important step in the formulation of Thurston's geometrization conjecture (now solved by Perelman \cite{PerelmanGeometry, Perelman3Manifolds}) and hyperbolization theorem \cite{ThurstonHype}.

In this paper we give a \emph{polynomial-time} algorithm to decide whether a mapping class is pseudo-Anosov (Algorithm~\ref{alg:nt_type}).
That is, the running time of our algorithm is bounded by a polynomial function of the complexity of its input.
We describe how we represent mapping classes and our notion of complexity in detail in Section~\ref{sub:complexity}.
However a key property of our complexity is that if we fix a generating set of $\Mod(S)$ then the complexity of our representation of a word is coarsely equivalent to the word's length.
Hence:

\begin{theorem}
\label{thrm:nt_type}
Fix an orientable surface $S$.
The pseudo-Anosov decision problem for $\textnormal{Mod}(S)$ lies in \PTIME. \qed
\end{theorem}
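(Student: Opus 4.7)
The plan is to derive the theorem from the polynomial-time computability of distances in the curve graph, which is the paper's principal algorithmic result. I would organise the reduction into three steps.

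First, I would invoke the Masur--Minsky characterisation: a mapping class $f \in \Mod(S)$ is pseudo-Anosov if and only if its asymptotic translation length on the curve graph,
\[
\translationlength(f) \defeq \lim_{n\to\infty} \frac{\dC(c, f^n(c))}{n},
\]
is strictly positive for some (equivalently any) curve $c$. So the pseudo-Anosov decision problem is equivalent to deciding whether $\translationlength(f) > 0$.

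Second, by Bowditch's theorem on tight geodesics, $\translationlength(f) \in \frac{1}{N_S}\mathbb{Z}$ for a constant $N_S$ depending only on the topological type of $S$. Together with the standard coarse lower bound $\dC(c, f^n(c)) \geq n \cdot \translationlength(f) - O(1)$, this reduces positivity testing to computing a single distance $\dC(c, f^N(c))$ for a threshold $N$ that is polynomial in $N_S$ and in the complexity of $f$ and $c$: if $\translationlength(f)=0$ the distance stays bounded, while if $\translationlength(f) \geq 1/N_S$ the distance eventually exceeds the additive constant. Producing $f^N(c)$ in the paper's representation is then routine in polynomial time, given the conventions set out in Section~\ref{sub:complexity}.

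Third and crucially, one must compute $\dC(c, f^N(c))$ itself in polynomial time. This is precisely the main algorithmic theorem of the paper, to be accessed here through Algorithm~\ref{alg:nt_type} (or its underlying distance subroutine). The main obstacle of the whole proof sits entirely in this step: the curve graph $\calC(S)$ is locally infinite, and there is no bounded-radius neighbourhood of $c$ or of $f^N(c)$ that witnesses short distances, so a naive breadth-first search cannot work. Certifying $\dC(\alpha,\beta) \leq k$ in polynomial time requires the tight geodesic and bounded geodesic image machinery that the paper's central algorithm is designed to deploy. By contrast, the Masur--Minsky characterisation, the Bowditch denominator bound, and the iteration $f \mapsto f^N$ are essentially off-the-shelf inputs.
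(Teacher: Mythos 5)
Your first and third steps match the paper: the decision reduces to whether $\translationlength(f) > 0$; Bowditch's theorem confines $\translationlength(f)$ to the lattice $\frac{1}{D}\mathbb{Z}$; and the algorithmic burden is carried by the polynomial-time geodesic computation of Algorithm~\ref{alg:geodesic}. However, your second step---thresholding $\dC(c, f^N(c))$ for a polynomially chosen $N$---contains a gap that the paper's actual argument is engineered to avoid.

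The assertion ``if $\translationlength(f) = 0$ the distance stays bounded'' is true but quantitatively empty as stated. When $f$ is reducible and aperiodic, the natural upper bound is $\dC(c, f^n(c)) \leq 2\dC(c, \canonical(f)) + O(1)$, and the quantity $\dC(c, \canonical(f))$ is \emph{not} a universal constant; it depends on $f$, and you would need to prove it is at most polynomial in $||f||$ before you could pick a separating threshold $N$. That bound is itself a nontrivial fact, and the proofs of it that I know pass through the bounded geodesic image theorem---precisely the machinery you have filed under ``off-the-shelf inputs,'' when in fact it is doing essential quantitative work at this very step.

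The paper sidesteps this issue. With a \emph{fixed} constant $M = M(S)$ independent of $||f||$, it constructs a geodesic from $c$ to $f^M(c)$, takes its midpoint $c'$, and then computes $\dC(c', f^M(c'))$. In the reducible case, Proposition~\ref{prop:reducible_distance} shows (via the bounded geodesic image theorem applied to the active subsurfaces) that $c'$ lies within distance $2$ of $\canonical(f)$ \emph{regardless of how far $c$ is from $\canonical(f)$}, so $\dC(c', f^M(c')) \leq 4$: a universal bound with no dependence on $||f||$. In the pseudo-Anosov case, Proposition~\ref{prop:midpoint_axis_distance} places $c'$ within $2\delta$ of the axis of $f^M$, whence $\dC(c', f^M(c'))/M$ lies within $1/(2D)$ of $\translationlength(f)$, and rounding to the nearest $1/D$ recovers $\translationlength(f)$ exactly. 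The midpoint step is the idea missing from your reduction, and without it (or, alternatively, an explicit polynomial bound on $\dC(c, \canonical(f))$) your threshold argument is incomplete.
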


This theorem has also been proven independently by Margalit--Strenner--Yurtta\c{s} using a different method.
They consider the piecewise-linear action of a mapping class $f$ on $\mathcal{ML}(S)$, the space of measured laminations.
They show that iteration can be used to find a lamination that is projectively invariant under $f$, and so decide whether $f$ is pseudo-Anosov, in polynomial time.
As a consequence they also obtain the dilatation of $f$ or, equivalently, the translation length of $f$ on the Teichm\"{u}ller space of $S$ equipped with the Teichm\"{u}ller metric.

By contrast, our algorithm relies on considering the natural action of a mapping class $f$ on the \emph{curve graph} $\calC(S)$.
This graph has a vertex for each (essential) curve on $S$ and curves $c, c' \in \calC(S)$ are connected via an edge (of length one) if and only if $\intersection(c, c') = 0$.
This is the $1$--skeleton of the curve complex, which was introduced by Harvey \cite{Harvey}.
We proceed by finding a curve that is close to an axis of $f$ or close to an $f$--invariant multicurve.
This enables us to compute the asymptotic translation length of $f$ on $\calC(S)$ by applying a large power (Algorithm~\ref{alg:asymptotic_translation_length}).
As a consequence, we can decide whether $f$ is pseudo-Anosov.

Our approach is also significantly different to other algorithms for determining Nielsen--Thurston type such as:
\begin{itemize}
\item The Bestvina--Handel algorithm \cite{BestvinaHandel}.
This works by determining the irreducibility of the corresponding outer automorphism induced on $\pi_1(S)$.
However this is only known to run in exponential time in the word length \cite[Corollary~A.2]{Kapovich}.
\item The list-and-check algorithms \cite[Corollary~3.4]{BellReducibility} \cite[Section~4]{HamidiChen} \cite[Theorem~1.3]{KoberdaMangahas}.
These work by producing a list of candidate reducing curves.
However these lists are typically exponentially long in the word length.
The exception to this is for punctured spheres where $\Mod(S_{0,n})$ is closely related to the braid group on $n-1$ strands.
Using the additional Garside structure of the braid group, Calvez refined the algorithm of Benardete--Gutierrez--Nitecki \cite{BenardeteGutierrezNitecki} to a polynomial-time algorithm for reducibility of braids \cite[Theorem~1]{Calvez}.
\item The \texttt{flipper} algorithm \cite{flipper} \cite[Section~4.3]{BellThesis}.
This works by performing maximal train track splits to find an Agol cycle \cite[Corollary~3.4]{Agol}.
However this is not known to run in polynomial time.
\end{itemize}

As a by-product, our techniques also recover the canonical curve system of a mapping class (Algorithm~\ref{alg:canonical}).
Margalit--Strenner--Yurtta\c{s} have also independently constructed such a polynomial-time algorithm using a different method.

\begin{theorem}
\label{thrm:canonical}
Fix an orientable surface $S$.
Algorithm~\ref{alg:canonical} takes a mapping class $f$ as input and returns its canonical curve system $\sigma(f)$ in polynomial time. \qed
\end{theorem}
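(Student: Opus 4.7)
The plan is a recursion that peels off one $f$-orbit of canonical reducing curves at a time, followed by a pruning pass. First, apply Algorithm~\ref{alg:nt_type}: if $f$ is periodic or pseudo-Anosov, then $\sigma(f) = \emptyset$ by definition. Otherwise $f$ is reducible, so its asymptotic translation length on $\calC(S)$ vanishes, and the machinery behind Algorithm~\ref{alg:asymptotic_translation_length} produces in polynomial time a curve $c$ together with a polynomially bounded exponent $N$ for which $f^N(c)$ lies within uniformly bounded distance of $c$ in $\calC(S)$. Using the polynomial-time distance algorithm, I would then sweep over the polynomially many curves of bounded complexity lying in the bounded-radius neighbourhood of $c$, testing each for invariance under $f^N$ (itself a polynomial-time distance check), to extract an honest $f$-invariant curve $c_0$.

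Having located $c_0$, cut $S$ along its $f$-orbit to obtain subsurfaces $S_1, \ldots, S_m$ permuted by $f$. Picking a representative $S_j$ from each $f$-orbit of components together with the smallest power $k_j$ such that $f^{k_j}$ preserves $S_j$, we obtain restrictions $f_j \in \Mod(S_j)$ that live in mapping class groups of strictly smaller complexity and, since $S$ is fixed, are drawn from a finite list of topological types. Recursively compute $\sigma(f_j)$ for each $j$, lift these back to $S$, and take the union with the $f$-orbit of $c_0$ to produce an $f$-invariant complete reduction system $\tau$.

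The last step is to prune $\tau$ down to the canonical system. For each $f$-orbit $\gamma \subseteq \tau$, I would check using Algorithm~\ref{alg:nt_type} whether the mapping class induced by an appropriate power of $f$ on the subsurface obtained by re-gluing along $\gamma$ is periodic or pseudo-Anosov; if so, delete $\gamma$ from $\tau$. Since $S$ is fixed there are only boundedly many orbits to test, and the resulting multicurve is the unique minimal $f$-invariant complete reduction system, which is $\sigma(f)$ by definition.

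The main obstacle will be complexity control across the recursion: we need a representation of each restricted class $f_j$ whose size is bounded by a polynomial in that of $f$. Naively expanding $f^{k_j}$ and projecting onto $\Mod(S_j)$ could blow up when $k_j$ is large, so instead I would track $f_j$ via its action on a triangulation of $S_j$ compatible with the surface representation developed in Section~\ref{sub:complexity}, rather than as a word in generators. Once this restriction step is in place, the fact that $S$ is fixed caps the recursion depth by a constant, and polynomial cost per level together with the polynomial-cost pruning pass yields the claimed total polynomial runtime.
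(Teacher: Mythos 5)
The gap is in your second step, where you propose to ``sweep over the polynomially many curves of bounded complexity lying in the bounded-radius neighbourhood of $c$, testing each for invariance under $f^N$''. First, the distance $\dC(c, f^N(c))$ is not uniformly bounded even when $\translationlength(f)=0$: what is bounded is $\dC(c', f^N(c'))$ where $c'$ is a \emph{midpoint} of a geodesic from $c$ to $f^N(c)$, since that midpoint is forced to lie near $\canonical(f)$. Second, and more seriously, $\calC(S)$ is locally infinite, so a bounded-radius ball contains infinitely many curves, and the $f^N$-invariant curve you are looking for need not have bounded complexity --- its edge vector can have bit-size on the order of $\poly(\|f\|)$, which corresponds to exponentially many candidate vectors. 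There is no way to turn ``search a small neighbourhood in $\calC(S)$'' into a finite, let alone polynomial, enumeration without an additional idea, so your extraction of an invariant curve $c_0$ does not go through.

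The paper avoids this by \emph{constructing} an invariant multicurve rather than searching for one. It takes a geodesic $\gamma$ from $c$ to $f^N(c)$, uses the bounded geodesic image theorem to show that some $v\in\gamma$ near the midpoint is disjoint from a component of $\canonical(f)$, and then sets $m \defeq \partial N(v \cup f^N(v))$. One shows $m \neq \emptyset$ for such $v$, and that any nonempty $m$ arising this way misses every active subsurface of $f^N$ and is hence $f^N$-invariant. Crucially $\|m\| \leq \poly(\|f\|)$ because it is built from two explicit curves on the geodesic, so everything stays polynomial. Your subsequent cut-and-recurse and pruning plan is a reasonable (if differently organised) alternative to the paper's ``take a maximal $f^N$-invariant multicurve, then test each of the at most $2^\xi$ submulticurves for purity'', and your worry about representing the restricted classes is legitimate; but without the constructive step producing $m$, the whole argument does not get off the ground.
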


To prove Theorems~\ref{thrm:nt_type} and \ref{thrm:canonical} we use results of Masur and Minsky \cite{MasurMinskyII, MasurMinskyI, MasurMinskyQuasiconvexity} namely the hyperbolicity of the curve graph, the bounded geodesic image theorem, the notion of tight geodesics and the quasiconvexity of train track splitting sequences.
Additionally, we use a theorem of Bowditch \cite{BowditchTight} on the rationality of asymptotic translation lengths of $\Mod(S)$ on $\calC(S)$.
However the key ingredient is a new algorithm for computing geodesics in $\calC(S)$:

\begin{theorem}
Fix an orientable surface $S$.
Algorithm~\ref{alg:geodesic_path} takes as input a pair of curves $a, b \in \calC(S)$ and returns a (tight) geodesic between them in polynomial time. \qed
\end{theorem}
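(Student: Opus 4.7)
The plan is to compute the tight geodesic in two phases. First, I construct a polynomially-long quasi-geodesic between $a$ and $b$ using train track splitting sequences; second, I sharpen it into a tight geodesic using the bounded geodesic image theorem together with the tightness constraint.

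For the first phase, I would build a maximal train track $\tau_0$ carrying $a$ and perform a sequence of splittings $\tau_0 \to \tau_1 \to \cdots \to \tau_N$ until the terminal track carries $b$. By the Masur--Minsky quasiconvexity theorem, reading off a vertex cycle of each intermediate $\tau_i$ yields a uniform quasi-geodesic in $\calC(S)$. Keeping the running time polynomial demands that consecutive splits along the same branch be grouped into a single ``power'' move, producing an encoding of bit-length polynomial in the input. Because $\calC(S)$ is Gromov hyperbolic, one has the a priori bound $\dC(a,b) = O(\log \intersection(a,b))$, so the desired geodesic has only polynomially many vertices and the splitting sequence, read compactly, has polynomially many recorded entries.

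For the second phase, I would sharpen this quasi-geodesic into an honest tight geodesic. At each position, the bounded geodesic image theorem restricts the possible tight multicurves: any tight geodesic from $a$ to $b$ must cut every proper subsurface $Y$ in which the subsurface projection distance $d_Y(a, b)$ is sufficiently large, and such ``large-projection'' subsurfaces can be extracted from the splitting data. Combined with the local tightness condition --- that $v_i$ be carried by $\partial(\textnormal{fill}(v_{i-1} \cup v_{i+1}))$ --- this pins down a polynomially bounded candidate set of tight multicurves at each position, from which the geodesic can be assembled vertex by vertex.

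The main obstacle will be controlling the combinatorial size of every intermediate object. Intersection numbers grow exponentially under the mapping class group, so they must be stored in binary; the splitting sequence must use power notation to have polynomial length; subsurface projections must themselves be computable in polynomial time from the train track data; and the candidate sets of tight multicurves must be kept small via Bowditch's finiteness result together with the bounded geodesic image theorem. I expect the hardest step to be the sharpening procedure: showing that it never has to enumerate an exponential set of candidate multicurves, which is exactly where the interplay between tightness and subsurface projections does all the work.
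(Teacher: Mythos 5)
Your first phase matches the paper closely: Algorithm~\ref{alg:quasiconvex} does build a polynomially sized quasiconvex subset $U$ containing $a$ and $b$ from a train track splitting sequence, using the Agol--Hass--Thurston twist acceleration (your ``power moves'') to keep the sequence short, and invokes Masur--Minsky quasiconvexity of splitting sequences exactly as you suggest. So far so good.

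The gap is in your second phase, which is where almost all of the real difficulty lies, and your proposal leaves it essentially open. You claim that the bounded geodesic image theorem plus the local tightness constraint ``pins down a polynomially bounded candidate set of tight multicurves at each position,'' but neither statement produces an enumeration. BGI tells you that a geodesic must pass within distance one of $\partial Y$ whenever $d_Y(a,b)$ is large, but that is a necessary condition on where the geodesic goes, not a finite list of candidate vertices: infinitely many multicurves lie near any given $\partial Y$, and the subsurfaces with large projection are themselves not obviously extractable in polynomial time from the splitting data. Likewise, the local tightness condition $a_i = \partial N(a_{i-1}\cup a_{i+1})$ only determines $a_i$ once both neighbours are known, so it cannot be used to ``assemble the geodesic vertex by vertex'' --- and in any case a greedy, position-by-position construction need not find a global geodesic. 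The paper instead relies on a genuinely different finiteness mechanism: Theorem~\ref{thrm:tight_neighbourhood} (from \cite{WebbCombinatorics, WebbThesis}) shows that every tight geodesic from $a$ to $b$ consists entirely of multicurves appearing in some tight path of universally bounded length $\le L$ between two points of the quasiconvex set $U$. One then needs Algorithm~\ref{alg:tight_filling_paths}, which constructs the set $P_\ell(u,u')$ of all tight paths of fixed length $\ell$ in polynomial time by crushing $S$ along an endpoint and searching a ball of radius $2\zeta\ell$ in the flip graph of the crushed surface (using the Sleator--Tarjan--Thurston diameter bound), and which comes with polynomial bounds on the complexity of the resulting multicurves (Proposition~\ref{prop:a_1_bounds}, Remark~\ref{rem:bounded_length_bounds}). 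Only with this explicit candidate set $V$ in hand can one build a finite graph $G$ on $V$ and run ordinary shortest-path search, followed by a cheap tightening pass. These two ingredients --- the bounded-tight-path neighbourhood theorem and the bounded-length tight-path enumeration algorithm --- are precisely what your proposal is missing, and they cannot be replaced by BGI alone.
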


A standard argument using curve surgery described by Hempel \cite[Lemma~2.1]{Hempel} shows that $\dC(a, b)$ is bounded above in terms of $\log(\intersection(a, b))$.
Unfortunately, $\calC(S)$ is locally infinite and so, even with this bound on the depth needed to search, na\"{\i}ve pathfinding algorithms fail immediately.

To avoid this, Algorithm~\ref{alg:geodesic_path} proceeds by first constructing a quasiconvex subset $U \subseteq \calC(S)$ which contains $a$ and $b$ (Algorithm~\ref{alg:quasiconvex}).
We then compute all tight paths between each pair of curves in $U$ of length at most $L$ using an algorithm of the second author (Algorithm~\ref{alg:tight_filling_paths}).
Tight paths were first introduced (as tight filling multipaths) in \cite{WebbCombinatorics}.
The quasiconvexity of our subset ensures that this collection of (multi)curves contains a (tight) geodesic from $a$ to $b$ (Theorem~\ref{thrm:tight_neighbourhood}).
Thus any polynomial-time geodesic finding algorithm for finite graphs can be used to find a geodesic from $a$ to $b$ through this subset.

The fact that we only ever need to construct tight paths of bounded length is critical.
Constructing tight paths becomes extremely expensive as these paths become longer, and so we avoid the high cost of computing a tight path directly from $a$ to $b$.
As a consequence, our algorithm runs exponentially faster than the geodesic finding algorithms of Birman--Margalit--Menasco \cite{BirmanMargalitMenasco}, Leasure \cite[Section~3]{LeasureThesis}, Shackleton \cite[Theorem~1.5]{Shackleton}, Watanabe \cite{WatanabeAlgorithm} and the second author \cite[Theorem~4.7]{WebbCombinatorics}.

Again we describe how we represent (multi)curves and our notion of complexity in detail in Section~\ref{sub:complexity}.
However a key property of our complexity is that if we fix an ideal triangulation $\calT$ of $S$ then the complexity of $c$ is coarsely equivalent to $\log(\intersection(c, \calT))$.
Algorithm~\ref{alg:nt_type} relies heavily on applying a large power of the given mapping class to a curve and a priori $\intersection(f(c), \calT)$ grows exponentially with the word length of $f$.
Hence this additional speed is essential for Theorem~\ref{thrm:nt_type} and Theorem~\ref{thrm:canonical}.

\subsection{Set-up}

In this paper we give an explicit proof for a fixed \emph{punctured} surface $S = S_{g, p}$ of genus $g$ with $p$ punctures, that is, when $p > 0$.
This enables us to use a polynomial-time algorithm of the authors to find the minimal position of a pair of multicurves \cite{BellWebbSimplificationApplications}.
All the results in this paper will hold for closed surfaces, when $p = 0$, by introducing an additional marked point as an artificial puncture.
To obtain minimal position for a pair of multicurves on such a surface, we isotope a curve across this marked point and into a normal form whenever necessary.
We can achieve this in polynomial time by applying Plandowski's algorithm \cite[Section~3]{Plandowski} to the corresponding cut sequence of the curve.

Throughout let $\xi = \xi(S) \defeq 3g - 3 + p$.
We will assume that $\xi \geq 2$.
In all cases excluded by this requirement the problems that we will consider have trivial or well-known solutions \cite{BeardonHockmanShort}.
Furthermore, let $\zeta = \zeta(S) \defeq -3 \chi(S)$.
We note that $\zeta \geq 3$ and so $S$ can be decomposed into an (ideal) triangulation.
Any such triangulation of $S$ has exactly $\zeta$ edges.

In order to be able to describe curves and mapping classes combinatorially, we fix one such triangulation $\calT_0$ of $S$ with edges $e_1, \ldots, e_\zeta$.

\subsection{Complexity}
\label{sub:complexity}

To define the complexities of our algorithms we recall two standard complexity theory definitions.
First, if $x$ is an integer then we define its complexity $||x||$ to be the \emph{bit-size} of $x$, that is, $||x|| \defeq \log(|x|)$.
Changing the base of the logarithm used only affects the complexity by a multiplicative factor and so will not impact the asymptotic running time of the algorithms we present.
However, it will be convenient take $\log \defeq \log_2$ throughout.
Second, if $v$ is an integer vector then we define its complexity $||v||$ to be the sum of the complexities of its entries.

Our algorithms manipulate curves and mapping classes.
We use our fixed triangulation $\calT_0$ to enable us to describe these combinatorially.

First, $\calT_0$ enables us to represent a (multi)curve or (multi)arc $a$ by its \emph{edge vector}:
\[ \calT_0(a) \defeq \left(\begin{array}{c}
\intersection(a, e_1) \\
\vdots \\
\intersection(a, e_\zeta)
\end{array}\right)  
\]
We use a slightly modified version of geometric intersection number to handle multiarcs \cite[Definition~5.1.2]{BellThesis}.
If $a$ contains $k$ copies of the edge $e_i$ then we define
\[ \intersection(a, e_i) \defeq -k. \]
We define the \emph{complexity} of $a$ to be the complexity of its corresponding edge vector, that is,
\[ ||a|| \defeq ||\calT_0(a)||. \]
Unless otherwise stated, whenever an algorithm in this paper requires a multicurve or multiarc for input, output or as an intermediate stage of a calculation we will assume that it is given via its edge vector.

Second, recall that the \emph{(ordered) flip graph} \cite[Section~2.2]{BellThesis} $\calF = \calF(S)$ is the graph with a vertex for each triangulation (where the edges are ordered).
Triangulations $\calT, \calT' \in \calF$ are connected via an edge (of length one) if and only if they differ by a \emph{flip}, as shown in Figure~\ref{fig:flip}, or a re-ordering.

\begin{figure}[ht]
\centering
\begin{tikzpicture}[scale=2,thick]

\node (rect) at (-1.5, 0) [draw,minimum width=3cm,minimum height=3cm] {};
\node (rect2) at (1, 0) [draw,minimum width=3cm,minimum height=3cm] {};

\draw (rect.south west) -- node [above, anchor=south] {$e$} (rect.north east);
\draw (rect2.north west) -- (rect2.south east);

\node [dot] at (rect.north west) {};
\node [dot] at (rect.north east) {};
\node [dot] at (rect.south west) {};
\node [dot] at (rect.south east) {};

\node [dot] at (rect2.north west) {};
\node [dot] at (rect2.north east) {};
\node [dot] at (rect2.south west) {};
\node [dot] at (rect2.south east) {};

\draw [thick,->] ($(rect.east)!0.25!(rect2.west)$) -- node[above] {Flip} ($(rect.east)!0.75!(rect2.west)$);

\node at (rect.south) [below,anchor=north] {$\calT$};
\node at (rect2.south) [below,anchor=north] {$\calT'$};

\end{tikzpicture}
\caption{Flipping the edge $e$ of $\calT$.}
\label{fig:flip}
\end{figure}
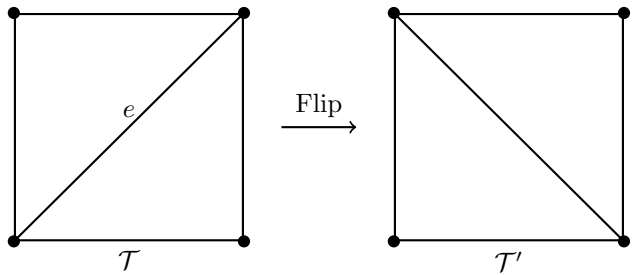

The flip graph is connected \cite[Page~190]{HatcherTriangulations} and is quasi-isometric to the mapping class group $\Mod(S)$ \cite[Proposition~I.8.19]{BridsonHaefliger}.
Thus we represent a mapping class $f$ via a path from $\calT_0$ to $f(\calT_0)$.
We define the \emph{complexity} of $f$, denoted $||f||$, to be the length of the given path.
For any word in a fixed generating set we can build such a path whose length is coarsely the word length.

Once more, unless otherwise stated, whenever an algorithm in this paper requires a mapping class for input, output or as an intermediate stage of a calculation we will assume that it is given via such a path.

Several calculations are straightforward with these choices of data structures.
For example, there are polynomial-time algorithms known for computing:
\begin{itemize}
\item whether a vector corresponds to a (multi)curve \cite[Lemma~2.1.2]{BellThesis},
\item the intersection number between two curves \cite[Corollary~3.5]{BellWebbSimplificationApplications},
\item whether two curves fill \cite[Section~4.3]{BellWebbSimplificationApplications},
\item whether two curves are equal \cite[Page~11]{BellThesis},
\item the components and multiplicities of a multicurve \cite{AgolHassThurston} \cite[Section~4]{BellSimplifying} \cite[Theorem~6.9, Corollary~6.10]{EricksonNayyeri},
\item the image of a curve under a mapping class \cite[Lemma~2.2.2]{BellThesis} \cite[Page~30]{MosherAutomatic},
\item the composition of two mapping classes, and
\item the order of a mapping class \cite[Theorem~7.5]{FarbMargalit} \cite{MosherAutomatic}.
\end{itemize}
These algorithms enable us to make several simplifying assumptions.
For instance, throughout this paper we will assume that any multicurve or multiarc has had its inessential and parallel components removed.
Thus multicurves correspond to cliques in $\calC(S)$.
This will be particularly important for boundaries of submanifolds, which will appear frequently in the form $\partial N(c \cup c')$, that is, the boundary of a regular neighbourhood of $c$ and $c'$ after they have been isotoped into minimal position.
For example, see Figure~\ref{fig:boundary}.

\begin{figure}[ht]
\centering
\begin{tikzpicture}[scale=1.35,thick]

\draw [red, dotted] (-1,-0.15) to [out=180,in=180] (-1,-0.5);
\draw [red, dotted] (-1,-0.0) to [out=180,in=180] (-1,0.5);
\draw [blue, dotted] (-2,0) to [out=270,in=270] (-1.28,-0.09);

\path [pattern=north west lines, pattern color=red]
	(0,0.5-0.1) to [out=0,in=180]
	(1,0.5) to [out=0,in=90]
	(2,0) to [out=270,in=0]
	(1,-0.5) to [out=180,in=0]
	(0,-0.5+0.1) to [out=180,in=0]
	(-1,-0.5) to [out=0,in=0]
	(-1,-0.15) to [out=90,in=270]
	(-1,-0.0) to [out=0,in=0]
	(-1,0.5) to [out=0,in=180]
	(0,0.5-0.1);

\draw [fill, white] (1.75,0) circle (5pt);
\draw [red] (1.75,0) circle (5pt);

\draw [fill, white] (0,0) circle (5pt);
\draw [red] (0,0) circle (5pt);

\draw [fill, white] (-0.72,-0.09) to [out=155,in=25]
	(-1.28,-0.09) to [out=340,in=200]
	(-0.72,-0.09);
\draw [fill, white] (1.28,-0.09) to [out=155,in=25]
	(0.72,-0.09) to [out=340,in=200]
	(1.28,-0.09);

\draw
	(0,0.5-0.1) to [out=0,in=180]
	(1,0.5) to [out=0,in=90]
	(2,0) to [out=270,in=0]
	(1,-0.5) to [out=180,in=0]
	(0,-0.5+0.1) to [out=180,in=0]
	(-1,-0.5) to [out=180,in=270]
	(-2,0) to [out=90,in=180]
	(-1,0.5) to [out=0,in=180]
	(0,0.5-0.1);

\draw (-1.5,0) to [out=-30,in=180+30] (-0.5,0);
\draw (-1.3,-0.1) to [out=30,in=180-30] (-0.7,-0.1);
\draw (0.5,0) to [out=-30,in=180+30] (1.5,0);
\draw (0.7,-0.1) to [out=30,in=180-30] (1.3,-0.1);

\draw [red] (-1,-0.15) to [out=0,in=0] (-1,-0.5);
\draw [red] (-1,-0.0) to [out=0,in=0] (-1,0.5);
\draw [blue] (-2,0) to [out=90,in=90] (-1.28,-0.09);

\node [blue] at (-2.5,0) {$\partial Y$};

\node [dot] at (1.75,0) {};
\node at (0,-0.75) {$\textcolor{red}{Y} \subseteq S$};

\draw [use as bounding box, draw=none] (-2,-0.75) rectangle (2,0.5);  

\end{tikzpicture}
\caption{As peripheral, null-homotopic and parallel components are removed, $\partial Y$ is a multicurve with a single component.}
\label{fig:boundary}
\end{figure}
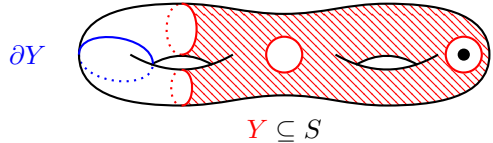

We also highlight the following inequalities which will be used throughout:
\begin{itemize}
\item $||f(c)|| \leq ||f|| + ||c||$,
\item $||f \circ g|| \leq ||f|| + ||g||$ and so $||f^n|| \leq n ||f||$,
\item $||\partial N(c \cup c')|| \leq \max(||c||, ||c'||) + 2$, and
\item if $c' \subseteq c$ then $||c'|| \leq ||c||$.
\end{itemize}

\section{Quasiconvex subsets}

In this section we give a polynomial-time algorithm to compute a $K'$--quasiconvex subset of $\calC(S)$ containing a pair of curves $a$ and $b$.
(See Algorithm~\ref{alg:quasiconvex}.)
The constant $K'$ is in fact universal.

This construction is important for Section~\ref{sec:geodesics}.
It provides a guide for our construction of a geodesic in $\calC(S)$ from $a$ to $b$.

We begin by describing an elementary construction of a $K$--quasiconvex subset of $\calC(S)$ containing $a$ and $b$.
Unfortunately, the subset produced by this na\"{\i}ve procedure may contain exponentially many curves in terms of the complexity of $a$ and $b$.
We describe a refinement which enables us to skip large sections of this subset which contribute almost nothing.
We show how to construct this subset which is still quasiconvex but now only contains polynomially many curves.

One key observation is that without loss of generality we may assume that
\[ a \in \short(\calT_0) \defeq \{ \textrm{curves that meet each edge of $\calT_0$ at most twice} \}. \]
This follows from the fact that we can simplify $\calT_0$ with respect to $a$ to obtain an $a$--minimal triangulation $\calT_1$ \cite{BellSimplifying}.
On this new triangulation $a \in \short(\calT_1)$ and $||\calT_1(b)|| \leq ||b|| + O(||a||^2)$ \cite[Corollary~3.5]{BellWebbSimplificationApplications}.
We can interchange between $\calT_0$ and $\calT_1$ representations of curves in polynomial time and at a complexity cost of only $O(||a||^2)$.
Hence it is sufficient to produce a $K$--quasiconvex subset containing $a$ and $b$ in the coordinates given by $\calT_1$ as we can then convert the curves of this subset to the coordinates given by $\calT_0$ afterwards.

Let $\tau_1$ be the (measured) train track obtained by retracting $b$ in each triangle of $\calT_0$ to one of the four possibilities shown in Figure~\ref{fig:elementary_train_track}.
We refer to this as the \emph{elementary train track} of $b$.

\begin{figure}[ht]
\centering
\begin{tabular}{cc}
	\begin{subfigure}[t]{0.4\textwidth}
		\centering
		\begin{tikzpicture}[scale=2,thick]
			\coordinate (A) at (210:0.577);  
			\coordinate (B) at (330:0.577);
			\coordinate (C) at (90:0.577);
			\coordinate (A2) at (210:0.3);
			\coordinate (B2) at (330:0.3);
			\coordinate (C2) at (90:0.3);
			\draw (A) -- node [below] {$e_k$} (B) -- node [right] {$e_j$} (C) -- node [left] {$e_i$} (A);
		\end{tikzpicture}
		\caption*{$\intersection(b, e_i) = \intersection(b, e_j) = \intersection(b, e_k) = 0$.}
	\end{subfigure} &
	\begin{subfigure}[t]{0.4\textwidth}
		\centering
		\begin{tikzpicture}[scale=2,thick]
			\coordinate (A) at (210:0.577);  
			\coordinate (B) at (330:0.577);
			\coordinate (C) at (90:0.577);
			\draw (A) -- node [below] {$e_k$} (B) -- node [right] {$e_j$} (C) -- node [left] {$e_i$} (A);
			\coordinate (A2) at (30:0.2);
			\coordinate (B2) at (150:0.2);
			\coordinate (C2) at (270:0.2);
			\draw [red] ($(A)!0.5!(C)$) to (B2) to [out=330, in=210] (A2) to ($(B)!0.5!(C)$);
		\end{tikzpicture}
		\caption*{$\intersection(b, e_i) = \intersection(b, e_j)$ and $\intersection(b, e_k) = 0$.}
	\end{subfigure} \\ \\
	\begin{subfigure}[t]{0.4\textwidth}
		\centering
		\begin{tikzpicture}[scale=2,thick]
			\coordinate (A) at (210:0.577);  
			\coordinate (B) at (330:0.577);
			\coordinate (C) at (90:0.577);
			\draw (A) -- node [below] {$e_k$} (B) -- node [right] {$e_j$} (C) -- node [left] {$e_i$} (A);
			\coordinate (A2) at (30:0.2);
			\coordinate (B2) at (150:0.2);
			\coordinate (C2) at (270:0.2);
			\draw (A) -- node [below] {$e_k$} (B) -- node [right] {$e_j$} (C) -- node [left] {$e_i$} (A);
			\draw [red] ($(A)!0.5!(C)$) to (B2) to [out=330, in=90] (C2) to ($(A)!0.5!(B)$);
			\draw [red] ($(B)!0.5!(C)$) to (A2) to [out=210, in=90] (C2) to ($(A)!0.5!(B)$);
		\end{tikzpicture}
		\caption*{$\intersection(b, e_i) + \intersection(b, e_j) = \intersection(b, e_k)$.}
	\end{subfigure} &
	\begin{subfigure}[t]{0.4\textwidth}
		\centering
		\begin{tikzpicture}[scale=2,thick]
			\coordinate (A) at (210:0.577);  
			\coordinate (B) at (330:0.577);
			\coordinate (C) at (90:0.577);
			\draw (A) -- node [below] {$e_k$} (B) -- node [right] {$e_j$} (C) -- node [left] {$e_i$} (A);
			\coordinate (A2) at (30:0.2);
			\coordinate (B2) at (150:0.2);
			\coordinate (C2) at (270:0.2);
			\draw [red] ($(A)!0.5!(C)$) to (B2) to [out=330, in=90] (C2) to ($(A)!0.5!(B)$);
			\draw [red] ($(B)!0.5!(C)$) to (A2) to [out=210, in=90] (C2) to ($(A)!0.5!(B)$);
			\draw [red] ($(A)!0.5!(C)$) to (B2) to [out=330, in=210] (A2) to ($(B)!0.5!(C)$);
		\end{tikzpicture}
		\caption*{Otherwise.}
	\end{subfigure}
\end{tabular}
\caption{The different possibilities for $\tau_1$ corresponding to $b$. Switches are in the interior of the triangles and so $\tau_1$ is trivalent.}
\label{fig:elementary_train_track}
\end{figure}
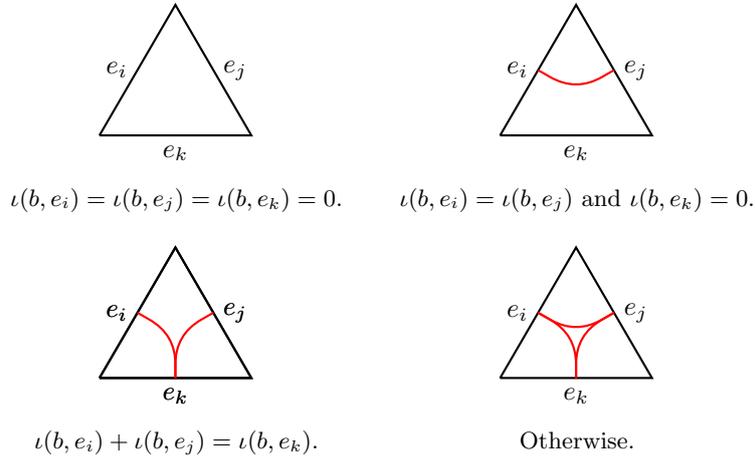

\begin{definition}
A \emph{selected train track} is a (measured) train track $\tau$ with a preferred switch $s$, written as the pair $(\tau, s)$.
We use the pair $(\tau, \emptyset)$ if $\tau$ does not have any switches.
\end{definition}

We follow the curve tracing construction of Erickson and Nayyeri in the setting of train tracks \cite{EricksonNayyeri}.
Suppose that $(\tau_i, s_i)$ is a selected train track where $s_i \neq \emptyset$.
Let $b_i$ be the incoming branch of $s_i$.
\begin{itemize}
\item If $b_i$ is a mixed branch then we shift $\tau_i$ along $b_i$ to obtain $\tau_{i+1}$.
We then set $s_{i+1}$ as shown in Figure~\ref{fig:shift}.
\item If $b_i$ is a large branch then we split $\tau_i$ along $b_i$ to obtain $\tau_{i+1}$.
If the splitting is a left or right splitting then we set $s_{i+1}$ as shown in Figure~\ref{fig:split}.
If the splitting is a central splitting then we set $s_{i+1}$ to be any switch of $\tau_{i+1}$ (or $\emptyset$ if $\tau_{i+1}$ does not have any switches).
\end{itemize}
In either case, we obtain a new selected train track $(\tau_{i+1}, s_{i+1})$.
We apply this process repeatedly, starting with $(\tau_1, s_1)$ where $s_1$ is a switch of $\tau_1$, to obtain a sequence of selected train tracks.

\begin{figure}[ht]
\centering
\begin{tikzpicture}[scale=2,thick]

\draw (0.5, 0) -- (2, 0);
\draw (0.5,0.5) to [out=-60,in=180] (1,0) node [dot,label=below:{$s_{i}$}] {};
\draw (1,-0.5) to [out=60,in=180] (1.5,0);
\node at (1.25,-0.75) {$\tau_{i}$};

\begin{scope}[shift={(2.5,0)}]
\draw (0.5, 0) -- (2, 0);
\draw (1,0.5) to [out=-60,in=180] (1.5,0) node [dot,label=below:{$s_{i+1}$}] {};
\draw (0.5,-0.5) to [out=60,in=180] (1,0);
\node at (1.25,-0.75) {$\tau_{i+1}$};
\end{scope}

\draw [->] (2.25,0) -- node [above] {Shift} (2.75,0);

\end{tikzpicture}
\caption{Shifting when the incoming branch is mixed.}
\label{fig:shift}
\end{figure}
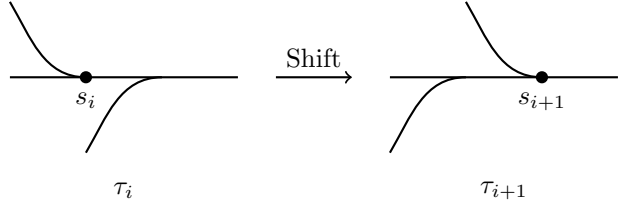

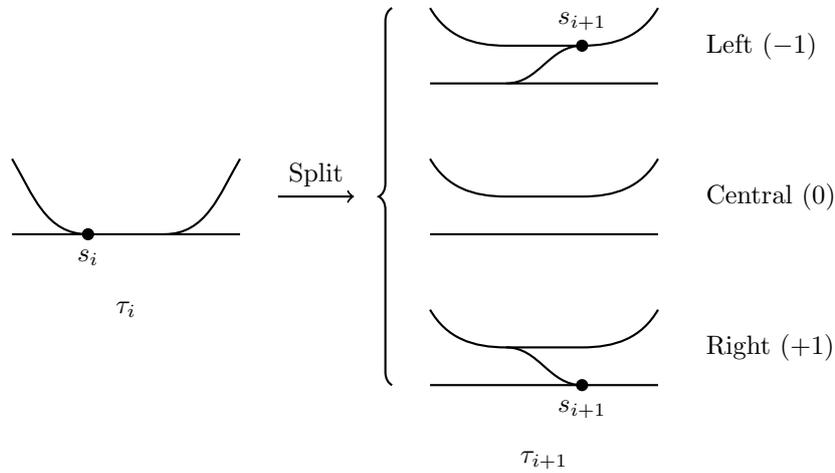
\begin{figure}[ht]
\centering
\begin{tikzpicture}[scale=2,thick]

\draw (0.5, 0) -- (2, 0);
\draw (0.5,0.5) to [out=-60,in=180] (1,0) node [dot,label=below:{$s_{i}$}] {};
\draw (2,0.5) to [out=240,in=0] (1.5,0);
\node at (1.25,-0.5) {$\tau_{i}$};

\begin{scope}[shift={(2.75,-1)}]
\draw (0.5, 0) -- (2, 0);
\draw (0.5,0.5) to [out=-60,in=180] (1,0.25) to (1.5,0.25) to [out=0,in=240] (2,0.5);
\draw (1,0.25) to [out=0,in=180] (1.5,0) node [dot,label=below:{$s_{i+1}$}] {};
\node [anchor=west] at (2.25,0.25) {Right ($+1$)};
\end{scope}

\begin{scope}[shift={(2.75,0)}]
\draw (0.5, 0) -- (2, 0);
\draw (0.5,0.5) to [out=-60,in=180] (1,0.25) to (1.5,0.25) to [out=0,in=240] (2,0.5);
\node [anchor=west] at (2.25,0.25) {Central ($0$)};
\end{scope}

\begin{scope}[shift={(2.75,1)}]
\draw (0.5, 0) -- (2, 0);
\draw (0.5,0.5) to [out=-60,in=180] (1,0.25) to (1.5,0.25) to [out=0,in=240] (2,0.5);
\draw (1,0) to [out=0,in=180] (1.5,0.25) node [dot,label=above:{$s_{i+1}$}] {};
\node [anchor=west] at (2.25,0.25) {Left ($-1$)};
\end{scope}

\node at (4,-1.5) {$\tau_{i+1}$};
\draw [->] (2.25,0.25) -- node [above] {Split} (2.75,0.25);
\draw [decorate,decoration={brace,amplitude=5pt,mirror}] (3,1.5) -- (3,-1);

\end{tikzpicture}
\caption{Splitting when the incoming branch is large.}
\label{fig:split}
\end{figure}

Now any train track on $S$ has at most $3 \zeta$ branches.
Therefore this sequence contains at most $\zeta$ central splits and we cannot ever perform more than $6 \zeta$ shifts in a row.
Furthermore, the sum of the weights on the large branches stays constant when a shift is performed and decreases when a split is performed.
Hence this sequence of selected train tracks eventually reaches $(\tau_n, s_n) = (b, \emptyset)$ at which point this sequence stops.

\begin{definition}
For a (measured) train track $\tau$, let $\short(\tau) \subseteq \calC(S)$ denote the set of curves carried by $\tau$ that run over each branch at most twice.
\end{definition}

For each $(\tau_i, s_i)$ in this sequence of selected train tracks we now choose $c_i \in \short(\tau_i)$.
We note that $\tau_n = b$ and so $\short(\tau_n) = \{ b \}$ and $c_n = b$.

\begin{remark}
We note that if $c$ is carried by $\tau_i$ then it is also carried by $\tau_1$.
This is denoted $\tau_i \carriedby \tau_1$.
Furthermore, by reversing the sequence of shifts and splits we can compute the transverse measure assigned to $\tau_1$ by $c$.
Thus we can compute the edge vector $\calT_0(c_i)$ for each chosen $c_i \in \short(\tau_i)$.
\end{remark}

We do not attempt to track how each $\tau_i$ is embedded into $S$ as this requires an exponential amount of information.
That is, we consider each $\tau_i$ as an abstract graph with decorations recording the incoming, left outgoing and right outgoing branches at each switch.

\begin{lemma}
\label{lem:quasiconvex_bounds}
If $a \in \short(\calT_0)$ then $||c_i|| \leq ||b|| + \zeta$.
More generally, $||c_i|| \leq ||b|| + O(||a||^2)$.
\end{lemma}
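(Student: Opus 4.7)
The plan is to reduce the first bound to a pointwise inequality of transverse measures on the elementary train track $\tau_1$. Because $c_i$ is carried by $\tau_i$ and $\tau_i \carriedby \tau_1$, we can regard $c_i$ as carried by $\tau_1$ with some non-negative integer weight vector $U$; similarly $b$ is carried by $\tau_1$ with the elementary weight vector $W$. Since each branch of $\tau_1$ lies in a single triangle of $\calT_0$ and terminates on its edges, for each edge $e_j$ the intersection number $\intersection(c_i, e_j)$ is equal to $\sum_\beta U(\beta)$ summed over the branches $\beta$ of $\tau_1$ ending on $e_j$, and likewise $\intersection(b, e_j) = \sum_\beta W(\beta)$. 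Hence it suffices to show $U \leq 2W$ componentwise: this gives $\intersection(c_i, e_j) \leq 2\intersection(b, e_j)$ for each of the $\zeta$ edges, and summing logarithms yields $||c_i|| \leq ||b|| + \zeta$.

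To establish $U \leq 2W$, I would first compare the two weight vectors on $\tau_i$ itself and then propagate the inequality back through the shifts and splits. On $\tau_i$ the weight of $c_i$ on every branch is at most $2$ by the definition of $\short(\tau_i)$, whereas the weight of $b$ on every branch is at least $1$, since every shift and split was performed along $b$ and so $b$ fills $\tau_i$. Hence the weight vector of $c_i$ on $\tau_i$ is componentwise at most $2$ times that of $b$. Now the map taking the $\tau_{k+1}$-weights of a carried curve to its $\tau_k$-weights is linear with non-negative integer entries: an inverse shift is a permutation of branches, while an inverse split reconstitutes the large branch by adding the weights of the branches it was split into. Composing these inverse maps from $\tau_i$ back to $\tau_1$ preserves the componentwise inequality, yielding $U \leq 2W$.

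The most delicate point I expect to encounter is the claim that $b$ assigns weight at least one to every branch of every $\tau_i$. This should follow inductively from the fact that the algorithm performs each split or shift along $b$: the type of split (central, left, or right) is chosen precisely so that every resulting branch still carries a positive (hence at least one, since the weights are integers) measure from $b$, and shifts do not change the measure at all. Finally, the general bound $||c_i|| \leq ||b|| + O(||a||^2)$ is obtained by applying the first bound to the $a$-minimal triangulation $\calT_1$, in which $a \in \short(\calT_1)$, and then invoking the paper's change-of-triangulation estimate to pass back to $\calT_0$-coordinates at an additive cost of $O(||a||^2)$.
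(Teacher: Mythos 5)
Your argument is correct and follows essentially the same route as the paper's proof: compare the branch weights of $c_i$ (at most $2$) and of $b$ (at least $1$) on $\tau_i$, push the componentwise inequality back to $\tau_1$ via the non-negative linear carrying maps to get $\intersection(c_i, e_j) \leq 2\intersection(b, e_j)$ on each edge, and take logarithms. The paper states this more tersely, but your elaborations (in particular that $b$ assigns positive weight to every branch of each $\tau_i$, and that inverse shifts and splits are non-negative linear) are exactly the unstated justifications underlying its argument.
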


\begin{proof}
We have that $c_i$ runs over each branch of $\tau_i$ at most twice, while $b$ runs over each branch of $\tau_i$ at least once.
Therefore, if $a \in \short(\calT_0)$ then the measure assigned to each branch of $\tau_1$ by $c_i$ is at most double the measure assigned by $b$.
Hence $\intersection(c_i, e_i) \leq 2 \intersection(b, e_i)$ for each edge $e_i$ of $\calT_0$ and so the result holds by taking logarithms.

More generally, if $a \notin \short(\calT_0)$ then we can convert the $\calT_1(c_i)$ coordinates back to $\calT_0(c_i)$ at a cost of $O(||a||^2)$.
Hence the general bound holds also.
\end{proof}

\begin{proposition}[{\cite[Theorem~1.3]{Aougab}}]
\label{prop:short_distance}
Whenever $\zeta$ is sufficiently large, if $\intersection(x, y) \leq 24 \zeta$ then $\dC(x, y) \leq 3$.
Hence there is a universal constant $T$ such that if $\intersection(x, y) \leq 24 \zeta$ then $\dC(x, y) \leq T$. \qed
\end{proposition}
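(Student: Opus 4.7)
The plan is to improve Hempel's logarithmic bound $\dC(x,y) \le 2\log_2 \intersection(x,y) + 2$, which under the hypothesis $\intersection(x,y) \le 24\zeta$ only yields $\dC(x,y) \le 2\log_2\zeta + O(1)$, to a bound that is independent of $\zeta$. The key leverage is that the intersection \emph{density} $\intersection(x,y)/\zeta$ is a bounded constant, so on average each complementary region of $x \cup y$ contributes only $O(1)$ to the intersection count.

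Put $x$ and $y$ in minimal position. If $x \cup y$ fails to fill $S$, then any essential simple closed curve in $S \setminus (x \cup y)$ is disjoint from both $x$ and $y$, giving $\dC(x,y) \le 2$; so assume that they fill. Cut $S$ along $x$ to obtain a surface $S'$ with $\chi(S') = \chi(S) = -\zeta/3$, inside which $y$ appears as $n \le 24\zeta$ disjoint essential arcs. Since a compact surface supports at most linearly-in-$|\chi|$ pairwise non-isotopic essential arcs, and $n/|\chi(S')|$ is bounded, a pigeonhole argument on the complementary regions of $y$ in $S'$ forces (for $\zeta$ sufficiently large) either (a) an essential simple closed curve in some complementary region, which is then disjoint from both $x$ and $y$, or (b) a wide band of parallel sub-arcs of $y$ in some complementary region along which one can perform a genuine surgery.

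In case (a) we are done with $\dC(x,y) \le 2$. In case (b), the band surgery produces an essential simple closed curve $z_1$ disjoint from $x$ with $\intersection(z_1, y)$ cut down by a definite fraction. Iterating this step a bounded number of times (the bound being absolute, since each surgery divides the intersection by a fixed factor once the density regime is reached) produces a curve $z_2$ disjoint from $x$ and meeting $y$ only $O(1)$ times; one further application of the same reasoning produces a third curve $z_3$ disjoint both from $y$ and from $z_2$, giving the path $x, z_2, z_3, y$ of length $3$ in $\calC(S)$.

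The main obstacle will be executing the surgery in case (b) so that it yields a non-peripheral, essential simple closed curve in $S$: because $x$ and $y$ are in minimal position, the naive bigon surgery is excluded, and one must refine to a band-of-arcs exchange whose output is shown to essentially fill a sub-surface of positive complexity. This is precisely where the hypothesis that $\zeta$ be sufficiently large is indispensable---only then does the pigeonhole argument leave enough slack to guarantee a band wide enough that its surgery is essential. The second sentence follows formally: there are only finitely many homeomorphism types of $S$ with $\zeta$ below any fixed threshold, and on each of them Hempel's logarithmic bound gives a constant ceiling on $\dC$ among curves with intersection at most $24\zeta$; taking $T$ to be the maximum of $3$ and these finitely many bounds gives a universal constant.
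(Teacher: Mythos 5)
The paper does not prove this proposition: the first sentence is imported directly from Aougab (Theorem~1.3 of that paper), and the second sentence follows from the finiteness of surfaces with $\zeta$ below the threshold together with Hempel's logarithmic bound on each of them --- which is exactly the argument you give for it, and that part of your write-up is correct.

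Your attempt to reprove Aougab's result from scratch does not hold up, for two reasons. First, once you have reduced to the filling case, your alternative (a) is vacuous: the complementary regions of $y$ inside $S'$ (that is, $S$ cut along $x$) are precisely the complementary regions of $x \cup y$ in $S$, and when $x$ and $y$ fill these are disks or once-punctured disks, so they contain no essential closed curve. Second, and more seriously, the pigeonhole you invoke does not produce a ``wide band of parallel sub-arcs.'' The number of isotopy classes of pairwise disjoint essential arcs in $S'$ is itself on the order of $\zeta$ (any ideal triangulation already has $\zeta$ edges), so distributing at most $24\zeta$ arcs of $y$ among $\Theta(\zeta)$ isotopy classes gives an average multiplicity of $O(1)$ per class --- a band of \emph{bounded} width, not one growing with $\zeta$. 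So the input to your proposed surgery step need not exist, and the gap occurs before the step you flagged as the obstacle. Aougab's actual argument must extract more from the density constraint than a naive pigeonhole can. The paper sidesteps all of this by citing the result rather than re-proving it.
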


\begin{remark}
Now for any two curves $x, y \in \short(\tau)$ we have that $\intersection(x, y) \leq 12 \zeta$.
As a result, it will not matter which curve in $\short(\tau_i)$ we choose.
\end{remark}

\begin{lemma}
For consecutive terms in our sequence, $\dC(c_i, c_{i+1}) \leq T$.
\end{lemma}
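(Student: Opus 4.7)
The plan is to invoke Proposition~\ref{prop:short_distance}, which reduces the claim to showing $\intersection(c_i, c_{i+1}) \leq K\zeta$ for some universal constant $K$ (enlarging $T$ accordingly if needed). By construction, $\tau_{i+1}$ is obtained from $\tau_i$ either by a shift or by a left, right, or central split, so I would split into two cases.

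For a shift, the operation changes only the local picture around a single switch and does not alter the set of carried measured laminations or the weights they induce on the branches; see Figure~\ref{fig:shift}. Consequently $\short(\tau_i) = \short(\tau_{i+1})$, so both $c_i$ and $c_{i+1}$ lie in $\short(\tau_i)$. The remark preceding the statement gives $\intersection(c_i, c_{i+1}) \leq 12\zeta$, and Proposition~\ref{prop:short_distance} yields $\dC(c_i, c_{i+1}) \leq T$.

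For a split we have $\tau_{i+1} \carriedby \tau_i$, so $c_{i+1}$ is carried by $\tau_i$. The key subsidiary claim I would establish is that the weights of $c_{i+1}$ as a measure on $\tau_i$ are still bounded by a small absolute constant. Reading each of the three pictures in Figure~\ref{fig:split} backwards, each branch of $\tau_i$ either corresponds to a single branch of $\tau_{i+1}$ or inherits the sum of the weights of at most two branches of $\tau_{i+1}$; since $c_{i+1}$ has $\tau_{i+1}$-weights at most $2$, its $\tau_i$-weights are at most $4$. Together with $c_i \in \short(\tau_i)$, we thus have both curves carried by the trivalent track $\tau_i$ with branch weights bounded by an absolute constant. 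Summing the intersection contributions locally at each of the $O(\zeta)$ switches then bounds $\intersection(c_i, c_{i+1})$ by an absolute multiple of $\zeta$, so Proposition~\ref{prop:short_distance} again gives $\dC(c_i, c_{i+1}) \leq T$.

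The delicate part is the weight-tracking under a split, especially the central split, where $\tau_{i+1}$ has strictly fewer branches than $\tau_i$ and the transformation of the weight vector is slightly asymmetric; this ultimately reduces to checking each of the three local moves in Figure~\ref{fig:split} case by case and reading off the linear relation on weights.
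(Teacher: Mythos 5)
Your proposal is correct and takes essentially the same approach as the paper: split by whether the step is a shift or a split, handle the shift case via $\short(\tau_i)=\short(\tau_{i+1})$ and the preceding remark, and handle the split case by tracking that $c_{i+1}$ is carried by $\tau_i$ with weight at most $4$ on each branch, so that its intersection with $c_i$ is bounded linearly in $\zeta$. The only small difference is cosmetic: the paper computes the resulting bound $\intersection(c_i,c_{i+1}) \leq 24\zeta$ explicitly, which is exactly the constant appearing in Proposition~\ref{prop:short_distance}, so no enlargement of $T$ is required; your phrase ``enlarging $T$ accordingly if needed'' is therefore unnecessary (and strictly speaking would require appealing to the more general statement in Aougab's theorem rather than the proposition as quoted).
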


\begin{proof}
If $(\tau_{i+1}, s_{i+1})$ was obtained from $(\tau_i, s_i)$ by shifting then $\short(\tau_{i+1}) = \short(\tau_i)$ and so $\dC(c_i, c_{i+1}) \leq T$ by the preceding remark.

On the other hand if $(\tau_{i+1}, s_{i+1})$ was obtained from $(\tau_i, s_i)$ by splitting then $c_{i+1}$ is carried by $\tau_i$ and runs over each branch at most four times.
Therefore $\intersection(c_i, c_{i+1}) \leq 24 \zeta$ and so $\dC(c_i, c_{i+1}) \leq T$ by Proposition~\ref{prop:short_distance}.
\end{proof}

\begin{theorem}[{\cite[Theorem~1.3]{MasurMinskyQuasiconvexity}}]
\label{thrm:quasiconvex}
For every $M > 0$ there is a constant $K = K(S, M)$ such that if $\tau_n \carriedby \cdots \carriedby \tau_1$ is a sequence of nested train tracks and $c_i \in \short(\tau_i)$ and $\dC(c_i, c_{i+1}) \leq M$ then $\{c_1, \ldots, c_n\}$ is a $K$--quasiconvex subset of $\calC(S)$. \qed
\end{theorem}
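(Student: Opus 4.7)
The plan is to establish that $\{c_1, \ldots, c_n\}$ is an unparameterised quasi-geodesic in the hyperbolic space $\calC(S)$; the stability of quasi-geodesics in a Gromov-hyperbolic space then gives quasiconvexity with a uniform constant $K = K(S,M)$ depending only on the hyperbolicity constant of $\calC(S)$ and on $M$.

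The coarse Lipschitz bound is already given by the hypothesis $\dC(c_i, c_{i+1}) \leq M$, so the substance lies in ruling out long backtracks. I would argue by contradiction: if some $c_i$ lies at distance more than $K$ from every $\calC(S)$-geodesic $\gamma$ joining $c_1$ to $c_n$, then by hyperbolicity of $\calC(S)$ the path $c_1, \ldots, c_n$ must perform a long excursion away from $\gamma$ in order to reach $c_i$. The goal is to witness this excursion by producing a proper subsurface $Y \subsetneq S$ whose boundary multicurve $\partial Y$ is disjoint from every vertex of $\gamma$ but for which the subsurface projections $\pi_Y(c_1)$ and $\pi_Y(c_n)$ are far apart in $\calC(Y)$. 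This is precisely the configuration forbidden by the bounded geodesic image theorem of Masur--Minsky, so obtaining it yields the required contradiction.

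The main obstacle is producing this subsurface $Y$ from the hypothesised detour in a way that cooperates with the train-track hypothesis. Here I would use the nested condition $\tau_n \carriedby \cdots \carriedby \tau_1$: since each $c_i \in \short(\tau_i)$ is carried by every earlier $\tau_j$, the projection $\pi_Y(c_i)$ can change only by a bounded amount per shift or split, and becomes constant once the splitting has proceeded past $Y$. Consequently large diameter of $\{\pi_Y(c_i)\}$ in $\calC(Y)$ is tied directly to the combinatorics of the splitting sequence and, via the coarse Lipschitz bound, to the geometry of the sequence in $\calC(S)$; this is what lets one locate the required $Y$ at the detour. Packaging this control into a single quasiconvexity constant is exactly the content of the Masur--Minsky hierarchy machinery, and the cleanest route in practice is to quote their Theorem~1.3 directly, as is done here.
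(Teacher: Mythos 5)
The paper does not prove this theorem: the \qed immediately following the statement indicates that it is quoted verbatim from Masur--Minsky's \emph{Quasiconvexity in the curve complex} (their Theorem~1.3), and you correctly say as much at the end of your proposal. So there is no in-paper proof to compare against, and citing is indeed the intended move.

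Read as a sketch of the Masur--Minsky argument, your proposal has the right ingredients (subsurface projections, the bounded geodesic image theorem, and the control coming from the nested carrying $\tau_n \carriedby \cdots \carriedby \tau_1$), but the step you yourself flag as the main obstacle --- producing the proper subsurface $Y$ that witnesses a hypothetical long detour --- is exactly where the substance lives, and the sketch does not supply it. The bounded geodesic image theorem gives one direction: if $d_{\calC(Y)}(c_1,c_n)$ is large then every $\calC(S)$-geodesic from $c_1$ to $c_n$ passes within distance one of $\partial Y$. Your contradiction needs the converse, namely that an excursion of the sequence far from a fixed geodesic forces the existence of such a $Y$, and that does not follow from $\delta$-hyperbolicity alone. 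Masur--Minsky obtain it via a genuinely hard diagonal-exchange/efficiency argument showing that the projections of a splitting sequence to every proper subsurface $\calC(Y)$ form uniform unparameterized quasi-geodesics, followed by a hierarchy-resolution step; and the statement that ``the projection becomes constant once the splitting has proceeded past $Y$'' is too strong --- one only gets a uniform diameter bound on the projections before and after a bounded window. None of this affects the paper, which simply quotes the result, but the sketch should not be mistaken for a self-contained proof.
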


Hence, as $T$ is universal, there is a constant $K = K(S)$ such that the subset $\{c_1, \ldots, c_m\}$ chosen above is $K$--quasiconvex in $\calC(S)$.
In \cite[Corollary~3.6]{Hamenstaedt}, Hamenst\"{a}dt showed that in fact $K$ may also be chosen to be a universal constant.

Unfortunately, we are only guaranteed that every $6 \zeta$ steps the sum of the weights on the large branches decreases by at least one.
As this starts at $\intersection(b, \calT_0)$, this na\"{\i}ve sequence of selected train tracks can be exponentially long in $||b||$.
Hence we cannot compute all of the terms in this sequence in polynomial time.
To overcome this we use a key idea of the construction of Agol--Hass--Thurston \cite[Section~4]{AgolHassThurston}, summarised succinctly by Erickson and Nayyeri \cite{EricksonNayyeri} as:
\begin{quote}
If you perform more than $6 \zeta$ splits with the same sign then you must be repeatedly unwinding along a curve (see Figure~\ref{fig:twist}).
\end{quote}
We use the convention that left, central and right splittings have \emph{sign} $-1$, $0$ and $+1$ respectively, as shown in Figure~\ref{fig:split}.

\begin{figure}[ht]
\centering
\begin{tikzpicture}[scale=2,thick]

\draw (0.5,0.5) to [out=-60,in=180] (1,0) node [dot,label=above:\contour*{white}{{$s_i$}}] {};
\foreach \x in {0.3,0.4,1.8,1.9,2} {\draw (\x,0) to [out=180,in=80] (\x-0.3,-0.5);}
\foreach \x in {0.5,0.7,1.1} {\draw (\x,0) to [out=0,in=100] (\x+0.3,-0.5);}

\draw [>->] (0,0) -- (2,0);
\draw [>->,red] (0,0.25) -- (2,0.25) node [right] {$c$};
\node at (1,-0.75) {$\tau_{i}$};

\begin{scope}[shift={(3,0)}]
\foreach \y in {0.15,0.2,0.25,0.3,0.35} {\draw (0,\y) to [out=0,in=180] (2,\y-0.05);}
\draw (0,0.1) to [out=0,in=180] (0.75,0.1) to [out=0,in=180] (1,0) node [dot,label=above:\contour*{white}{{$s_j$}}] {};
\draw (0.5,0.5) to [out=-60,in=180] (0.75,0.4) to [out=0,in=180] (2,0.35);
\foreach \x in {0.3,0.4,1.8,1.9,2} {\draw (\x,0) to [out=180,in=80] (\x-0.3,-0.5);}
\foreach \x in {0.5,0.7,1.1} {\draw (\x,0) to [out=0,in=100] (\x+0.3,-0.5);}

\draw [>->] (0,0) -- (2,0);
\node at (1,-0.75) {$\tau_{j} = T_c^k(\tau_i)$};
\end{scope}

\draw [->] (2.25,0) -- node [above] {Twist} (2.75,0);

\end{tikzpicture}
\caption{Twisting to accelerate through a (left) unwinding.}
\label{fig:twist}
\end{figure}
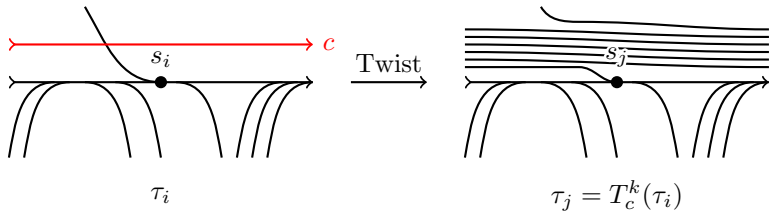

Therefore, if we reach $(\tau_i, s_i)$ having just performed a split and the last $6 \zeta$ splits all had the same sign then we accelerate through the unwinding process that is about to occur.
We achieve this by computing the curve $c$ we are unwinding along, which is in $\short(\tau_i)$, and the largest power of a Dehn twist along $c$ that we can apply to $\tau_i$ so that it still carries $b$.
This power is simply $\lfloor m / w \rfloor$ where $w$ is the weight on the branch of $\tau_i$ merging into $c$ and $m$ is the minimal ratio between the weight on a branch along $c$ and the number of times that $c$ runs over it \cite[Section~3.2]{EricksonNayyeri}.
See Figure~\ref{fig:twist}.
Thus we jump directly from $(\tau_i, s_i)$ to $(\tau_j, s_j)$ where, potentially, $j \gg i$.

This gives us a subsequence $(\tau_{i_1}, s_{i_1}), \ldots, (\tau_{i_m}, s_{i_m})$.
Erickson and Nayyeri showed that this subsequence contains only $O(||b||)$ curves \cite[Section~4]{EricksonNayyeri}.
However, in the corresponding subsequence of short curves $c_{i_1}, \ldots, c_{i_m}$, the potentially large jumps actually still do not move us far in $\calC(S)$:

\begin{proposition}
Consecutive terms in our subsequence are at most distance $T + 6$ apart in $\calC(S)$.
\end{proposition}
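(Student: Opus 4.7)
The plan is to reduce to the substantive case and then control the curve-graph distance via a single auxiliary curve. A pair of consecutive terms $(\tau_{i_k},s_{i_k}),(\tau_{i_{k+1}},s_{i_{k+1}})$ in the subsequence is obtained in one of two ways: either they are already consecutive in the underlying full sequence of selected train tracks, in which case the previous lemma immediately gives $\dC(c_{i_k},c_{i_{k+1}}) \leq T$ and there is nothing to do; or we have jumped via twist acceleration from some $(\tau_i,s_i)$ to $(\tau_j,s_j)=(T_c^{k}(\tau_i),s_j)$, where $c$ is the unwinding curve and $k=\lfloor m/w\rfloor$ is the twist exponent. Only the twist case requires argument, and I would handle it by showing $c_i$ and $c_j$ are each close to $c$.

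First I would verify that $c\in\short(\tau_i)$. By construction (compare Figure~\ref{fig:twist}), $c$ is the simple closed loop of branches of $\tau_i$ around which the measured train track is unwinding; in particular it is carried by $\tau_i$ and traverses each branch of $\tau_i$ at most once. Since both $c$ and $c_i$ lie in $\short(\tau_i)$, the Remark preceding the previous lemma gives $\intersection(c_i,c)\leq 12\zeta$, and hence $\dC(c_i,c)\leq T$ by Proposition~\ref{prop:short_distance}.

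Next I would argue that $c\in\short(\tau_j)$ as well. The Dehn twist $T_c$ fixes $c$ setwise, so applying $T_c^{k}$ to $\tau_i$ leaves the loop of branches comprising $c$ unchanged; it only adds extra branches that spiral around $c$. Therefore $c$ survives as a simple loop of branches of $\tau_j$, traversing each at most once, so $c\in\short(\tau_j)$. Combined with $c_j\in\short(\tau_j)$, the same reasoning as above yields $\dC(c,c_j)\leq T$. The triangle inequality then gives
\[
\dC(c_i,c_j)\;\leq\;\dC(c_i,c)+\dC(c,c_j)\;\leq\;2T,
\]
which is well within the claimed bound $T+6$ since $T$ is a universal constant bounded by $3$ for large $\zeta$.

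The main point to handle carefully will be pinning down the combinatorial identification of $c$ so that both memberships $c\in\short(\tau_i)$ and $c\in\short(\tau_j)$ are immediate. This is the crux of the Agol–Hass–Thurston observation quoted before Figure~\ref{fig:twist}: more than $6\zeta$ same-sign splits force the existence of a unique simple loop of branches along which the track is wrapping, and this loop persists through the twist. Once that combinatorial identification is in place the rest of the argument is a clean double invocation of Proposition~\ref{prop:short_distance}, and the extra slack of $+6$ in the statement absorbs any loss from the universal bound on $T$.
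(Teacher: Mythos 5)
Your high-level strategy is sound and genuinely different from the paper's. You reduce to the twist-acceleration case, and then bound $\dC(c_i,c_j)$ via the auxiliary curve $c$ as an intermediary, using twice that both endpoints and $c$ lie in $\short(\cdot)$ of the appropriate track. Your observations that $c\in\short(\tau_i)$ and that $c=T_c^k(c)\in\short(T_c^k(\tau_i))=\short(\tau_j)$ (since carrying with bounded weights is preserved by a homeomorphism applied to both curve and track) are correct and clean. The paper instead passes through the intermediate curve $T_c^k(c_i)$: it first observes that $c$, drawn transversal to $\tau_i$ as in Figure~\ref{fig:twist}, crosses $\tau_i$ at most once, so $\intersection(c_i,c)\leq 2$ and hence $\dC(c_i,c)\leq 3$; then, since $T_c^k$ is an isometry of $\calC(S)$ fixing $c$, it gets $\dC(c_i,T_c^k(c_i))\leq 2\dC(c_i,c)\leq 6$; finally $T_c^k(c_i),c_j\in\short(\tau_j)$ gives the remaining $T$. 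So the paper's route exploits a much smaller intersection estimate where yours invokes the blunter $12\zeta$ bound, and the isometry trick is used quite differently.

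The gap in your argument is the final numerical comparison. You obtain $\dC(c_i,c_j)\leq 2T$, and then assert this is "well within $T+6$ since $T$ is a universal constant bounded by $3$ for large $\zeta$." But Proposition~\ref{prop:short_distance} only gives the bound $3$ for $\zeta$ \emph{sufficiently large}; the universal constant $T$ must absorb the finitely many small-$\zeta$ cases and is not claimed to be $3$, nor even $\leq 6$, there. If $T>6$ then $2T>T+6$ and your argument does not establish the stated bound. Your argument therefore proves the weaker statement that consecutive terms are at most $2T$ apart, which is a uniform bound and would still feed Theorem~\ref{thrm:quasiconvexity} perfectly well, but it does not prove the proposition as literally stated. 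To recover the constant $T+6$ one needs the sharper input that $c$ may be drawn transversal to $\tau_i$ meeting it at most once, which is what the paper uses.
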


\begin{proof}
Suppose that we jump from $\tau_i$ to $\tau_{j} = T_c^k(\tau_i)$.
The curve $c$ meets $\tau_i$ at most once.
Hence, as $c_i \in \short(\tau_i)$ runs over each branch at most twice, $\intersection(c_i, c) \leq 2$ and so $\dC(c_i, c) \leq 3$.
Therefore
\begin{align*}
\dC(c_i, T_c^k(c_i)) &\leq \dC(c_i, c) + \dC(c, T_c^k(c_i)) \\
 &= \dC(c_i, c) + \dC(T_c^k(c), T_c^k(c_i)) \\
 &= 2 \dC(c_i, c) \\
 &\leq 6
\end{align*}
Now $T_c^k(c_i) \in \short(\tau_j)$ and $\diam(\short(\tau_j)) \leq T$ by Proposition~\ref{prop:short_distance}.
Hence
\[ \dC(c_i, c_j) \leq T + 6. \]

As before $\dC(c_i, c_{i+1}) \leq T$.
Hence the distance between consecutive curves in this subsequence is bounded above by $T + 6$.
\end{proof}

Therefore once again by Theorem~\ref{thrm:quasiconvex}, we have that the $\{c_{i_1}, \ldots, c_{i_m}\}$ is a quasiconvex subset of $\calC(S)$ which contains $b$.
Furthermore, since we only jump forwards after performing $6 \zeta$ splits with the same sign in a row we have that $c_{i_1} = c_1$ and $c_{i_m} = c_n$.
Hence this subset contains $b$.

Finally, for ease of notation let $c_{i_0} = c_0 \defeq a$.
Now although $c_0$ may not be in $\short(\tau_1)$ we still have that $c_0, c_1 \in \short(\calT_0)$.
It follows from this that $\intersection(c_0, c_1) \leq 4 \zeta$ and so $\dC(c_0, c_1) \leq T$ by Proposition~\ref{prop:short_distance}.

The curve graph is $\delta$--hyperbolic \cite[Theorem~1.1]{MasurMinskyI}, that is, all triangles are $\delta$--slim.
In fact $\delta$ is a universal constant and has been explicitly bounded \cite[Theorem~1.1]{Aougab} \cite[Theorem~1.1]{BowditchUniform} \cite[Corollary~7.1]{ClayRafiSchleimer} \cite[Theorem~1.1]{HenselPrzytyckiWebb}.
Hence if we add a single point to a quasiconvex subset of $\calC(S)$ then the resulting subset is also quasiconvex.

Therefore $\{c_{i_0}, c_{i_1}, \ldots, c_{i_m}\}$ is a $K'$--quasiconvex subset of $\calC(S)$ containing $a$ and $b$.
As $K'$ depends only on $K$, $T$ and $\delta$, it too is a universal constant.

\begin{corollary}
Algorithm~\ref{alg:quasiconvex} computes a $K'$--quasiconvex subset of $\calC(S)$ containing $a$ and $b$ in polynomial time. \qed
\end{corollary}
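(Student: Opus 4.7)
The quasiconvexity half of the corollary is essentially already assembled in the preceding discussion, so the plan is to collect those pieces and then concentrate on the running time. The chain of lemmas culminating in Theorem~\ref{thrm:quasiconvex} gives that $\{c_{i_1},\ldots,c_{i_m}\}$ is $K$--quasiconvex for a universal $K$: the train tracks form a nested sequence $\tau_n \carriedby \cdots \carriedby \tau_1$, consecutive chosen short curves are at most $T+6$ apart in $\calC(S)$, and each $c_{i_k} \in \short(\tau_{i_k})$. Prepending $a = c_{i_0}$ with $\dC(c_0,c_1)\le T$ (a consequence of $c_0,c_1\in\short(\calT_0)$ and Proposition~\ref{prop:short_distance}) enlarges the quasiconvexity constant by at most a function of $\delta$, using slimness of triangles in the $\delta$--hyperbolic space $\calC(S)$. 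All of $K$, $T$, $\delta$ are universal, so $K'$ is too; containment of $a$ and $b$ is immediate since $c_{i_1}=c_1$ and $c_{i_m}=c_n=b$.

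For the polynomial running time, the plan is to combine three bounds: the length of the accelerated subsequence, the bit-complexity of the intermediate objects, and the per-step cost. The Erickson--Nayyeri analysis bounds the number $m$ of subsequence entries by $O(\|b\|)$, and between consecutive entries only $O(\zeta)$ shifts and splits can occur before $6\zeta$ same-sign splits trigger a twist jump; hence the total number of primitive operations is $O(\zeta\,\|b\|)$. Each such operation is a local combinatorial modification on an abstract trivalent graph with at most $3\zeta$ branches together with an update of its weight vector. By Lemma~\ref{lem:quasiconvex_bounds} and the fact that weights on $\tau_i$ are dominated by those on $\tau_1$, every weight vector that appears has bit-size $O(\|b\|+\|a\|^2)$, so the updates run in polynomial time. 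A twist jump additionally requires reading off the unwinding curve $c$ from $\tau_i$, the weight $w$ on the branch merging into $c$, and the minimum ratio $m$ of weight to traversal count along $c$, and then forming $\lfloor m/w\rfloor$; each of these is a polynomial-time primitive at our complexity. To output each $c_{i_k}$ in $\calT_0$ coordinates, the recorded sequence of shifts, splits and twist jumps is reversed to push the short measure on $\tau_{i_k}$ back to a measure on $\tau_1$, which converts to $\calT_1$-coordinates and then to $\calT_0$-coordinates at the additional $O(\|a\|^2)$ cost already incurred when replacing $\calT_0$ by $\calT_1$.

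The only real obstacle — already resolved in the text by invoking \cite{AgolHassThurston,EricksonNayyeri} — is the acceleration itself: without the twist shortcut, the sum of weights on large branches decreases by only $\Omega(1)$ per $6\zeta$ elementary steps, so the naive sequence length is $\Theta(\intersection(b,\calT_0))$, exponential in $\|b\|$. The delicate point is therefore not any single piece of the analysis above but the correctness of identifying $c$, $w$ and $m$ directly from $\tau_i$ without simulating the intermediate splits; with that shortcut in hand, the polynomial step count, the polynomial per-step cost via Lemma~\ref{lem:quasiconvex_bounds}, and the $K'$--quasiconvexity inherited from Theorem~\ref{thrm:quasiconvex} combine to yield the corollary.
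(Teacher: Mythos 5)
Your proposal is correct and follows essentially the same approach as the paper: it assembles the quasiconvexity from Theorem~\ref{thrm:quasiconvex} and the $T$, $T+6$ distance bounds, adds $a$ using $\delta$--hyperbolicity, and gets the polynomial running time from the Erickson--Nayyeri $O(\|b\|)$ bound on the accelerated subsequence together with the bit-size bounds of Lemma~\ref{lem:quasiconvex_bounds}. (One small imprecision: the $O(\|b\|)$ already counts the shifts, splits and twist jumps actually performed, so the extra factor of $\zeta$ you insert is unnecessary, though harmless.)
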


\begin{algorithm}[ht!]
\caption{Quasiconvex subsets in $\calC(S)$}
\label{alg:quasiconvex}
\begin{algorithmic}
\Require{Curves $a$ and $b$ where $a \in \short(\calT_0)$.}
\Ensure{A $K'$--quasiconvex subset of $\calC(S)$ containing $a$ and $b$.}
	\State \Comment{We drop the subsequence notation; so $\tau_5$ here corresponds to $\tau_{i_5}$ above.}
	\State $\tau_1 \gets$ the elementary train track associated to $b$
	\IIf {$\tau_1$ does not have any switches}
		$s_1 \gets \emptyset$
	\EElse
		$s_1 \gets$ a switch of $\tau_1$
	\EndIIf
	\State current sign $\gets 0$ \Comment{The type of splitting we have been performing.}
	\State count $\gets 0$ \Comment{The number of these splits we have performed.}
	\State $i \gets$ 1
	\While {$s_i \neq \emptyset$} \Comment{$\tau_i \neq b$. This loop only runs $O(||b||)$ times.}
		\State $b_i \gets$ incoming branch of $s_i$
		\If {$b_i$ is mixed}
			\State $(\tau_{i+1}, s_{i+1}) \gets (\tau_i, s_i)$ shifted along $b_i$ \Comment{See Figure~\ref{fig:shift}.}
		\Else \Comment{$b_i$ is large.}
			\State $(\tau_{i+1}, s_{i+1}) \gets (\tau_i, s_i)$ split along $b_i$ \Comment{See Figure~\ref{fig:split}.}
			\State sign $\gets$ sign of split performed \Comment{Either $-1$, $0$ or $+1$.}
			\If {sign $=$ current sign}
				\State count $\gets$ count + 1
			\Else \Comment{Start counting again.}
				\State current sign $\gets$ sign
				\State count $\gets 1$
			\EndIf
		\EndIf
		\State $i \gets i+1$
		\If {count $> 6 \zeta$} \Comment{Accelerate by twisting. Note sign $\neq 0$ here.}
			\State $c \gets$ the curve we have been splitting along
			\State $w \gets$ the weight on the outgoing branch of $s_i$ that is not in $c$
			\State $m \gets \min \{\textrm{weight on branch} / \textrm{number of times it is transversed by $c$} \}$
			\State $k \gets \textrm{sign} \cdot \lfloor m / w \rfloor$ \Comment{Multiply by sign to twist in the correct direction.}
			\State $(\tau_{i+1}, s_{i+1}) \gets T_c^k((\tau_i, s_i))$ \Comment{See Figure~\ref{fig:twist}.}
			\State $i \gets i+1$
			\State count $\gets 0$
		\EndIf
	\EndWhile
	\State $c_0 \gets a$
	\For {$j \gets 1, \ldots, i$}
		\State $c_j \gets$ a curve in $\short(\tau_j)$
	\EndFor
	\State \Return $c_0, \ldots, c_i$
\end{algorithmic}
\end{algorithm}

\section{Geodesics}
\label{sec:geodesics}

In this section we give a polynomial-time algorithm to compute a geodesic $c_0, \ldots, c_n$ in $\calC(S)$ between a pair of curves $a$ and $b$.
(See Algorithm~\ref{alg:geodesic}.)

In particular, we note that if $c_0, \ldots, c_n$ is a geodesic from $a$ to $b$ then $d_{\calC(S)}(a, b) = n$.
Hence one immediate consequence of being able to compute a geodesic in polynomial time is that:

\begin{corollary}
Algorithm~\ref{alg:distance} computes $d_{\calC(S)}(a, b)$ in polynomial time. \qed
\end{corollary}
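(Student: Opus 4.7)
The plan is to reduce this corollary immediately to the preceding theorem on computing geodesics. Specifically, Algorithm~\ref{alg:distance} will simply call Algorithm~\ref{alg:geodesic_path} on the input pair $(a, b)$, obtain a (tight) geodesic $c_0, c_1, \ldots, c_n$ from $a$ to $b$, and return the integer $n$. By the definition of a geodesic in $\calC(S)$, we have $\dC(a, b) = n$, so correctness of the output is immediate from the correctness guarantee of Algorithm~\ref{alg:geodesic_path}.

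For the running time, the preceding theorem asserts that Algorithm~\ref{alg:geodesic_path} terminates in time polynomial in $||a|| + ||b||$. In particular, the length $n$ of the geodesic it produces is bounded by the number of steps of the algorithm, hence is polynomial in the input size, and so reading off $n$ and returning it contributes only an $O(\log n)$ overhead. Consequently the total running time of Algorithm~\ref{alg:distance} is polynomial in $||a|| + ||b||$, as required.

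There is no substantive obstacle here beyond the preceding theorem; the only thing to be careful about is that the output complexity $||n||$ is indeed polynomial in the input complexity, which follows because $n$ itself is polynomial in $||a|| + ||b||$ (in fact, by the Hempel surgery bound mentioned in the introduction, $n = O(\log \intersection(a,b)) = O(||a|| + ||b||)$, though we do not need this sharper estimate here).
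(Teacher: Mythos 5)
Your proposal is correct and matches the paper's argument exactly: Algorithm~\ref{alg:distance} simply invokes the geodesic-computing algorithm on $(a,b)$ and returns the length of the resulting path, with correctness and polynomial running time both inherited immediately from the preceding theorem. (The paper's listing calls Algorithm~\ref{alg:geodesic} rather than Algorithm~\ref{alg:geodesic_path}, but since the former just projects the tight geodesic to an induced path of the same length, the distinction is immaterial here.)
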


\begin{algorithm}[ht]
\caption{Distance in $\calC(S)$.}
\label{alg:distance}
\begin{algorithmic}
\Require{Curves $a$ and $b$.}
\Ensure{$d_{\calC(S)}(a, b)$.}
	\State $\gamma \gets$ a geodesic from $a$ to $b$ \Comment{Use Algorithm~\ref{alg:geodesic}.}
	\State \Return $|\gamma| - 1$
\end{algorithmic}
\end{algorithm}

Throughout this section we use the idea of tight geodesics, first introduced by Masur and Minsky \cite[Definition~4.2]{MasurMinskyII}.

\begin{definition}
A \emph{multipath} (of length $n$) is a sequence of multicurves $a_0, \ldots, a_n$ in which consecutive multicurves are disjoint and do not share any components.
That is, $\intersection(a_i, a_{i+1}) = 0$ and $a_i \cap a_{i+1} = \emptyset$ for each $i$.
\end{definition}

\begin{definition}[{\cite[Section~3]{WebbCombinatorics}}]
A multipath $a_0, \ldots, a_n$ is \emph{tight} if:
\begin{itemize}
\item $a_i = \partial N(a_{i-1} \cup a_{i+1})$ for each $i$, and
\item $a_i$ and $a_j$ fill $S$ whenever $|i - j| \geq 3$.
\end{itemize}
We also refer to a tight multipath as a \emph{tight path}.
\end{definition}

If $a_0, \ldots, a_n$ is a multipath then by choosing a component $c_i \subseteq a_i$ for each $i$ we obtain an \emph{induced path} $c_0, \ldots, c_n$ in $\calC(S)$.
Disjointness of $a_i$ and $a_{i+1}$ implies that $c_i$ and $c_{i+1}$ are disjoint while $a_i \cap a_{i+1} = \emptyset$ implies that $c_i \neq c_{i+1}$.

\begin{definition}
A multipath is \emph{geodesic} if all of its induced paths in $\calC(S)$ are geodesics.
Again, we refer to a tight geodesic multipath as a \emph{tight geodesic}.
\end{definition}

\subsection{Tight paths of given length}

Masur and Minsky showed that there is a tight geodesic between every pair of curves \cite[Lemma~4.5]{MasurMinskyII}.

Work of the second author \cite[Theorem~4.7]{WebbCombinatorics} provides an elementary procedure to construct $P_\ell(a, b)$, the set of all tight paths of length $\ell$ between two multicurves $a$ and $b$.
This algorithm was also discovered independently by Watanabe \cite{WatanabeTight} though not explicitly stated.
We recall this procedure and show that for fixed $\ell$ it runs in polynomial time.
(See Algorithm~\ref{alg:tight_filling_paths}.)

We start with two trivial cases:
\begin{enumerate}[(1)]
\item \label{itm:base_1} If $\ell = 1$ then $a$ and $b$ form a tight path if and only if they are disjoint and do not share any components.
Thus,
\[ P_1(a, b) = \begin{cases}
\{(a, b)\} & \textrm{if} \; \intersection(a, b) = 0 \; \textrm{and} \; a \cap b = \emptyset  \\
\emptyset & \textrm{otherwise.}
\end{cases} \]
\item \label{itm:base_2} If $\ell = 2$ then the middle term of a tight path from $a$ to $b$ must be $m \defeq \partial N(a \cup b)$.
In order to form a tight path $m$ must be non-empty and cannot share any components with $a$ or $b$, thus
\[ P_2(a, b) = \begin{cases}
\{(a, \partial N(a \cup b), b)\} & \textrm{if} \; \partial N(a \cup b) \neq \emptyset \; \textrm{and} \; \partial N(a \cup b) \cap (a \cup b) = \emptyset  \\
\emptyset & \textrm{otherwise.}
\end{cases} \]
\end{enumerate}
In \cite[Section~4.3]{BellWebbSimplificationApplications} the authors gave a polynomial-time procedure to construct $\partial N(x \cup y)$.
Therefore, all of these tests and constructions can be done in polynomial time.

Therefore we devote the remainder of this section to constructing tight paths when $\ell \geq 3$.
We proceed by induction on $\ell$, that is, we assume that we can compute $P_{\ell-1}(x, y)$ for any pair of multicurves $x$ and $y$.
Additionally, we may assume that $a$ and $b$ fill as otherwise $P_\ell(a, b) = \emptyset$ automatically.

To begin, let $S'$ denote the (possibly disconnected) surface obtained by \emph{crushing} $S$ along $a$ \cite[Definition~3.4.1]{BellThesis}.
For example, see Figure~\ref{fig:crush}.

\begin{figure}[ht]
\centering
\begin{tikzpicture}[scale=1.35,thick]

\begin{scope}[shift={(-2.5,0)}]
\draw [red, dotted] (-1,-0.15) to [out=180,in=180] (-1,-0.5);
\draw
	(0,0.5-0.1) to [out=0,in=180]
	(1,0.5) to [out=0,in=90]
	(2,0) to [out=270,in=0]
	(1,-0.5) to [out=180,in=0]
	(0,-0.5+0.1) to [out=180,in=0]
	(-1,-0.5) to [out=180,in=270]
	(-2,0) to [out=90,in=180]
	(-1,0.5) to [out=0,in=180]
	(0,0.5-0.1);
\draw (-1.5,0) to [out=-30,in=180+30] (-0.5,0);
\draw (-1.3,-0.1) to [out=30,in=180-30] (-0.7,-0.1);
\draw (0.5,0) to [out=-30,in=180+30] (1.5,0);
\draw (0.7,-0.1) to [out=30,in=180-30] (1.3,-0.1);
\draw [red] (-1,-0.15) to [out=0,in=0] node [right] {$a$} (-1,-0.5);
\node [dot] at (1.75,0) {};
\node at (0,-0.75) {$S$};
\end{scope}

\begin{scope}[shift={(2.5,0)}]
\draw
	(-1.5,0) to [out=-30,in=90]
	(-1.1,-0.35) to [out=270,in=0]
	(-1.25,-0.5) to [out=180,in=270]
	(-2,0) to [out=90,in=180]
	(-1,0.5) to [out=0,in=180]
	(0,0.5-0.1) to [out=0,in=180]
	(1,0.5) to [out=0,in=90]
	(2,0) to [out=270,in=0]
	(1,-0.5) to [out=180,in=0]
	(0,-0.5+0.1) to [out=180,in=0]
	(-0.75,-0.5) to [out=180,in=270]
	(-0.9,-0.35) to [out=90,in=210]
	(-0.5,0);
\draw (-1.3,-0.1) to [out=30,in=180-30] (-0.7,-0.1);
\draw (0.5,0) to [out=-30,in=180+30] (1.5,0);
\draw (0.7,-0.1) to [out=30,in=180-30] (1.3,-0.1);
\node [dot] at (1.75,0) {};
\node [dot] at (-1.25,-0.3) {};
\node [dot] at (-0.75,-0.3) {};
\node at (0,-0.75) {$S'$};
\end{scope}

\draw [->] (-0.35,0) -- node [above] {Crush} (0.35,0);

\end{tikzpicture}
\caption{Crushing $S$ along $a$.}
\label{fig:crush}
\end{figure}
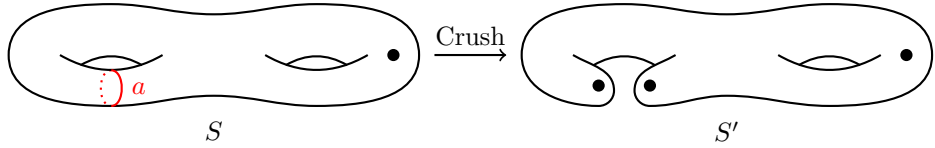

Any multicurve $x$ on $S$ has a projection $x'$ on $S'$.
If each component of $x$ meets $a$ then $x'$ is a multiarc (where parallel components are combined).
Otherwise, if $x$ is disjoint from $a$ then $x'$ is a curve.
The third possibility in which some but not all of the components of $x$ are disjoint from $a$ will not occur.

Let $\calT'_b \in \calF(S')$ be a triangulation of $S'$ which contains the multiarc $b'$.
Let
\[ T' \defeq \{ \calT' \in \calF(S') : d_{\calF(S')}(\calT', \calT'_b) \leq 2 \zeta \ell \}. \]
For each multiarc $t \subseteq \calT' \in T'$, let $m' \defeq \partial N(t)$.
Such a multicurve $m'$ has a unique lift back to a multicurve $m$ on $S$.
For example, see Figure~\ref{fig:lift}.
Define $A_1$ to be the set of all such $m$.

\begin{figure}[ht]
\centering
\begin{tikzpicture}[scale=1.35,thick]

\begin{scope}[shift={(-2.5,0)}]
\draw [blue, dotted] (-1.4,-0.05) to [out=-60,in=60] (-1.4,-0.47);
\draw [blue, dotted] (-0.6,-0.05) to [out=240,in=120] (-0.6,-0.47);
\draw
	(0,0.5-0.1) to [out=0,in=180]
	(1,0.5) to [out=0,in=90]
	(2,0) to [out=270,in=0]
	(1,-0.5) to [out=180,in=0]
	(0,-0.5+0.1) to [out=180,in=0]
	(-1,-0.5) to [out=180,in=270]
	(-2,0) to [out=90,in=180]
	(-1,0.5) to [out=0,in=180]
	(0,0.5-0.1);
\draw (-1.5,0) to [out=-30,in=180+30] (-0.5,0);
\draw (-1.3,-0.1) to [out=30,in=180-30] (-0.7,-0.1);
\draw (0.5,0) to [out=-30,in=180+30] (1.5,0);
\draw (0.7,-0.1) to [out=30,in=180-30] (1.3,-0.1);

\draw [blue] (-1.4,-0.05) to [out=210,in=270]
	(-1.65,0) to [out=90,in=180]
	(-1,0.2) to [out=0,in=90]
	(-0.35,0) to [out=270,in=-40]
	(-0.6,-0.05);
\draw [blue] (-1.4,-0.47) to [out=150,in=270]
	(-1.85,0) to [out=90,in=180]
	(-1,0.4) to [out=0,in=90]
	(-0.15,0) node [right] {$m$} to [out=270,in=40]
	(-0.6,-0.47);
\node [dot] at (1.75,0) {};
\node at (0,-0.75) {$S$};
\end{scope}

\begin{scope}[shift={(2.5,0)}]
\draw [blue, dotted] (-1.4,-0.05) to [out=-60,in=60] (-1.4,-0.49);
\draw [blue, dotted] (-0.6,-0.05) to [out=240,in=120] (-0.6,-0.49);
\draw
	(-1.5,0) to [out=-30,in=90]
	(-1.1,-0.35) to [out=270,in=0]
	(-1.25,-0.5) to [out=180,in=270]
	(-2,0) to [out=90,in=180]
	(-1,0.5) to [out=0,in=180]
	(0,0.5-0.1) to [out=0,in=180]
	(1,0.5) to [out=0,in=90]
	(2,0) to [out=270,in=0]
	(1,-0.5) to [out=180,in=0]
	(0,-0.5+0.1) to [out=180,in=0]
	(-0.75,-0.5) to [out=180,in=270]
	(-0.9,-0.35) to [out=90,in=210]
	(-0.5,0);
	
\draw [red] (-1.25,-0.3) to [out=180,in=270]
	(-1.75,0) to [out=90,in=180]
	(-1,0.3) to [out=0,in=90]
	(-0.25,0) to [out=270,in=0]
	(-0.75,-0.3);
\node [red] at (-1,-0.3) {$t$};
\draw [blue] (-1.4,-0.05) to [out=210,in=270]
	(-1.65,0) to [out=90,in=180]
	(-1,0.2) to [out=0,in=90]
	(-0.35,0) to [out=270,in=-40]
	(-0.6,-0.05);
\draw [blue] (-1.4,-0.49) to [out=150,in=270]
	(-1.85,0) to [out=90,in=180]
	(-1,0.4) to [out=0,in=90]
	(-0.15,0) node [right] {$m'$} to [out=270,in=40]
	(-0.6,-0.49);
\draw (-1.3,-0.1) to [out=30,in=180-30] (-0.7,-0.1);
\draw (0.5,0) to [out=-30,in=180+30] (1.5,0);
\draw (0.7,-0.1) to [out=30,in=180-30] (1.3,-0.1);
\node [dot] at (1.75,0) {};
\node [dot] at (-1.25,-0.3) {};
\node [dot] at (-0.75,-0.3) {};
\node at (0,-0.75) {$S'$};
\end{scope}

\draw [<-] (-0.35,0) -- node [above] {Lift} (0.35,0);

\end{tikzpicture}
\caption{Lifting $m' \defeq \partial N(t)$ on $S'$ back to $m$ on $S$.}
\label{fig:lift}
\end{figure}
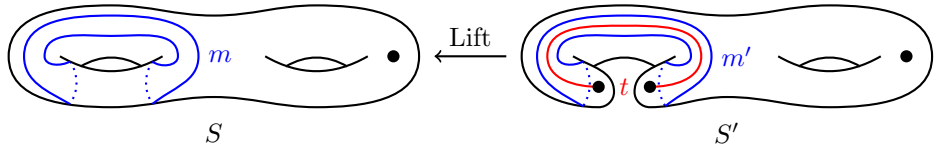

\begin{remark}
The multiarcs $t \subseteq \calT' \in T'$ are independent of the ordering of the edges of $\calT'$.
Hence we need only consider the triangulations of $S'$ that can be obtained from $\calT'_b$ by applying at most $2 \zeta \ell$ flips.
Thus we have that $|A_1| \leq 2^\zeta \cdot \zeta^{2 \zeta \ell}$.
\end{remark}

\begin{proposition}
If $a_0, \ldots, a_\ell$ is a tight path from $a$ to $b$ then $a_1 \in A_1$.
\end{proposition}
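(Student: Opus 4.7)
The plan is to show that the multicurve $a_1$ arises as the lift of $m' = \partial N(t)$ for the specific choice $t = a_2'$, the projection of $a_2$ to $S'$, and then to argue that $a_2'$ lies in some triangulation within $2\zeta \ell$ flips of $\calT'_b$.

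First I would verify that $a_2'$ really is a multiarc. Since the multipath is tight, $a_1 = \partial N(a_0 \cup a_2)$, and moreover $a_1$ shares no components with $a_0$ or $a_2$. If $a_2$ were disjoint from $a_0$, then $\partial N(a_0 \cup a_2)$ would (after removing parallel, null-homotopic and peripheral components) either be empty or reduce to components parallel to those of $a_0 \cup a_2$, contradicting the multipath condition. Using the stated dichotomy that the projection is either a multicurve (when $a_2$ is disjoint from $a_0$) or a multiarc (when every component meets $a_0$), this forces every component of $a_2$ to meet $a_0 = a$, so $t := a_2'$ is a multiarc on $S'$. Next, I would observe that crushing commutes with the regular neighborhood construction in the sense that $\partial N(a_0 \cup a_2)$ on $S$ pushes forward to $\partial N(a_2')$ on $S'$, and therefore $a_1$ is precisely the unique lift of $m' = \partial N(t)$ back to $S$.

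The main work is then to produce a triangulation $\calT' \in T'$ containing $t$. I would do this by chaining backwards along the tight path, constructing triangulations $\calT'_b = \calT'_\ell, \calT'_{\ell-1}, \ldots, \calT'_2$ of $S'$ with $a_i' \subseteq \calT'_i$ and $d_{\calF(S')}(\calT'_i, \calT'_{i+1}) \leq 2\zeta$. For $i \geq 3$ the filling condition guarantees that every component of $a_i$ meets $a_0$, so $a_i'$ is a multiarc; for $i = 2$ this was just established. Since $a_i$ and $a_{i+1}$ are disjoint on $S$ and share no components, their projections $a_i', a_{i+1}'$ are disjoint multiarcs on $S'$ with no shared components, so $a_i' \cup a_{i+1}'$ extends to a triangulation $\calT$ of $S'$. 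Now $\calT$ and $\calT'_{i+1}$ both contain the multiarc $a_{i+1}'$; I would appeal to a standard bound on the flip-graph diameter of the cut surface $S' \setminus a_{i+1}'$ (together with the freedom to re-order edges) to conclude that $d_{\calF(S')}(\calT, \calT'_{i+1}) \leq 2\zeta$, then set $\calT'_i := \calT$. Telescoping gives $d_{\calF(S')}(\calT'_2, \calT'_b) \leq 2\zeta(\ell - 2) \leq 2\zeta \ell$, so $\calT'_2 \in T'$ contains $t = a_2'$, whence $a_1 \in A_1$.

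The main obstacle is the per-step flip bound $d(\calT'_i, \calT'_{i+1}) \leq 2\zeta$. Producing a common triangulation $\calT$ containing both $a_i'$ and $a_{i+1}'$ is straightforward, but passing from $\calT'_{i+1}$ to $\calT$ requires a bound on how far apart two triangulations of $S'$ that share the multiarc $a_{i+1}'$ can be. This reduces to a flip-distance diameter bound for the (possibly disconnected) surface obtained by cutting $S'$ along $a_{i+1}'$, which is where the factor $2\zeta$ really comes from; care must also be taken about the fact that the flip graph is ordered, so re-orderings of the remaining edges contribute a further controlled number of steps.
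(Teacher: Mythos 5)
Your approach matches the paper's: project $a_2, \ldots, a_\ell$ to multiarcs on $S'$, build a chain of triangulations $\calT'_\ell, \ldots, \calT'_2$ with $a'_i \subseteq \calT'_i$ and a per-step flip bound of $2\zeta$, then telescope to place $\calT'_2 \in T'$. Your arguments that $a'_2$ is a multiarc and that $a_1$ is the lift of $\partial N(a'_2)$ are also correct and essentially what the paper does.

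The gap is in what you yourself flag as ``the main obstacle.'' You appeal to ``a standard bound on the flip-graph diameter of the cut surface $S' \setminus a'_{i+1}$,'' but no such bound exists in general: the flip graph of a positive-complexity surface has \emph{infinite} diameter, so the mere fact that $\calT$ and $\calT'_{i+1}$ share the multiarc $a'_{i+1}$ does not by itself control their distance. The ingredient you are missing is the second condition in the definition of a tight path, namely that $a_i$ and $a_j$ fill $S$ whenever $|i-j| \geq 3$. Applied with $j = 0$, this says $a$ and $a_{i+1}$ fill $S$ for all $i \geq 2$, so after crushing along $a$ the multiarc $a'_{i+1}$ decomposes $S'$ into polygons and once-punctured polygons. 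It is only for these pieces that a diameter bound exists --- the Sleator--Tarjan--Thurston bound for the flip graph of a polygon --- and this is exactly where the constant $2\zeta$ comes from. Without invoking the filling condition at this point, the inductive step does not go through.

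Two smaller remarks. First, your case analysis for why every component of $a_2$ meets $a$ is phrased as if $a_2$ were entirely disjoint from $a_0$; the cleaner statement is that a single component $c$ of $a_2$ disjoint from $a_0$ would survive as a component of $\partial N(a_0 \cup a_2) = a_1$, contradicting $a_1 \cap a_2 = \emptyset$. Second, the same filling condition you already use to show $a'_i$ is a multiarc for $i \geq 3$ is the one you need here, so the fix costs you nothing extra --- you just need to deploy it at the flip-bound step as well.
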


\begin{proof}
Note that every component of $a_2, \ldots, a_\ell$ meets $a$ and so these project to multiarcs $a'_2, \ldots, a'_\ell$ on $S'$.

We claim that each $a'_i$ is contained in a triangulation $\calT'_i$ such that
\[ d_{\calF(S')}(\calT'_i, \calT'_b) \leq 2 \zeta (\ell - i). \]
This is clearly true for $a'_\ell = b$ as this is contained in $\calT'_b$.

Now suppose that $\calT'_{i+1}$ is a triangulation which contains $a'_{i+1}$.
Since disjointness is preserved by projecting, $a'_i$ and $a'_{i+1}$ must both be submultiarcs of a common triangulation $\calT'_{i}$.
However, as $a'_{i+1}$ fills it decomposes the surface into polygons and once-punctured polygons.
Hence
\[ d_{\calF(S')}(\calT'_i, \calT'_{i+1}) \leq 2 \zeta \]
by the Sleator--Tarjan--Thurston diameter bound on the flip graph of a polygon \cite[Lemma~2]{SleatorTarjanThurston}.
The claim then holds by induction on $i$.

On the other hand, $a_1 = \partial N(a_0 \cup a_2)$ is disjoint from $a$ and so it projects to a curve $a'_1 = \partial N(a'_2)$ in $S'$.
By the previous claim, there is a triangulation $\calT'_2 \in T'$ which contains $a'_2$.
Therefore, $a_1 \in A_1$ by construction.
\end{proof}

\begin{proposition}
\label{prop:a_1_bounds}
Suppose that $\ell$ is fixed.
We can compute $A_1$ in polynomial time.
Moreover, for each curve $m \in A_1$ we have that
\[ ||m|| \leq \poly(||a|| + ||b||). \]
\end{proposition}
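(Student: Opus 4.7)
The plan is to convert the definition of $A_1$ into an explicit procedure and track the bit-complexity at each stage. First I would construct the crushed surface $S'$ together with the projected multiarc $b'$, and then extend $b'$ to a triangulation $\calT'_b$ of $S'$. Using the polynomial-time crushing routine together with the fact that $S'$ admits a triangulation built combinatorially from $\calT_0$ and the components of $a$ and $b$, this produces $\calT'_b$ with every edge having $\calT_0$-edge-vector complexity $\poly(||a|| + ||b||)$.

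Next I would enumerate $T'$ by breadth-first search in $\calF(S')$ starting at $\calT'_b$, out to depth $2\zeta \ell$. Since $\zeta$ is a constant (the surface is fixed) and $\ell$ is fixed, we have $|T'| \leq \zeta^{2\zeta \ell} = O(1)$. Each flip is a polynomial-time update on the edge vectors of the triangulation: the standard tropical (max-plus) formula expressing a newly introduced diagonal in terms of the edges of the quadrilateral it spans shows that the bit-size of each entry grows by at most an additive constant per flip. After the $O(1)$ flips required to reach any $\calT' \in T'$, every edge of $\calT'$ therefore still has complexity $\poly(||a|| + ||b||)$.

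For each $\calT' \in T'$ and each of the at most $2^\zeta = O(1)$ subcollections $t \subseteq \calT'$ that form a multiarc, I would compute $m' = \partial N(t)$ using the polynomial-time boundary algorithm of~\cite{BellWebbSimplificationApplications}. Iterating the inequality $||\partial N(x \cup y)|| \leq \max(||x||, ||y||) + 2$ over the $O(1)$ components of $t$ yields $||m'|| = \poly(||a|| + ||b||)$. The lift $m' \mapsto m$ from $S'$ back to $S$ is the identity on the underlying curves, since $m'$ is disjoint from the crushing punctures, and converting its edge vector to $\calT_0$-coordinates on $S$ is a polynomial-time change of basis that preserves the bound. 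Assembled over the $O(1)$ triangulations and $O(1)$ multiarc subsets, the entire enumeration runs in polynomial time and every $m \in A_1$ satisfies $||m|| \leq \poly(||a|| + ||b||)$.

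The main obstacle is the complexity control through the flip sequence: one must verify carefully that the $\calT_0$-edge-vector of a newly introduced diagonal is only additively-$O(1)$ heavier than those of the edges of the quadrilateral it spans, so that $O(\zeta \ell)$ flips compose to a polynomial bound rather than an exponential one. Once this additive control is in hand, the constancy of $|T'|$ and of the number of multiarc subsets of each $\calT'$ forces both the polynomial running time and the polynomial bit-size of every element of $A_1$ simultaneously.
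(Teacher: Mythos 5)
Your proposal follows the same overall strategy as the paper: crush $S$ along $a$, project $b$ to $b'$, extend $b'$ to a triangulation $\calT'_b$ of $S'$, enumerate the $O(1)$ triangulations within flip-distance $2\zeta\ell$, take boundaries of submulti\-arcs, and lift back to $S$. The one place you should be more careful is the coordinate system used during the flip enumeration. You speak of tracking ``$\calT_0$-edge-vector complexity'' of the diagonals of $\calT' \in T'$ and controlling its growth by the tropical flip formula; but the flip formula governs intersection numbers measured against a triangulation of the \emph{same} surface, and those diagonals live on $S'$, not $S$. The paper sidesteps this by working in $\calT'_b$-coordinates on $S'$ (where $\partial N(t)$ has $O(1)$ complexity), converting to the descended triangulation $\calT'_0$ by reversing the simplification process, and only then lifting to $\calT_0$-coordinates on $S$ via the integral lifting matrix $M$, whose entries are bounded by $\intersection(a, \calT_0)$. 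Since $M$ does not distribute over $\max$, one cannot apply the tropical formula directly in lifted $\calT_0$-coordinates; one must track complexity on $S'$ first and apply $M$ once at the end. With that correction the argument goes through as you intended, and gives the same polynomial bound.
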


\begin{proof}
Suppose that $c \subseteq a$ is a curve.
If $c \notin \short(\calT_0)$ then we can first switch to a triangulation $\calT_1$ such that $c \in \short(\calT_1)$ and $||\calT_1(a)||, ||\calT_1(b)|| \leq \poly(||a|| + ||b||)$ in $\poly(||a||)$ time \cite[Corollary~3.8]{BellSimplifying}.
Hence, without loss of generality we may assume that $c \in \short(\calT_0)$.

Under this assumption, it is straightforward to crush $\calT_0$ along $c$ and obtain a triangulation $\calT'$ of the crushed surface.
Furthermore, since $c \in \short(\calT_0)$, we can compute minimal position representatives of $c$ and $b$ on $\calT_0$ in $\poly(||b||)$ time \cite[Corollary~2.6]{BellWebbSimplificationApplications}.
We can directly read $\calT'(a)$ and $\calT'(b')$ off of these representatives and it follows that $||\calT'(b')|| \leq ||b||$.

By repeating this crushing procedure for each component of $a$ in turn, the triangulation $\calT_0$ of $S$ descends to a triangulation $\calT'_0$ of $S'$ \cite[Page~24]{BellThesis}.
Furthermore, by tracking $b$ through this crushing process we obtain $\calT'_0(b')$ in $\poly(||a|| + ||b||)$ time.

Now the coordinates of $b'$ on $\calT'_0$ may be very complicated.
However, we can use the simplification process again to obtain a triangulation $\calT'_b$ of $S'$ which contains $b'$ in $\poly(||b||)$ time as $||\calT'_0(b')|| \leq ||b||$.

We obtain each $\calT' \in T'$ by applying at most $2 \zeta \ell$ flips and reorderings to $\calT'_b$.
By computing each $\calT' \in T'$ and each submultiarc $t \subseteq \calT'$ in turn, we can compute
\[ \{ \calT'_b(\partial N(t)) : t \subseteq \calT' \in T' \}. \]
Furthermore, as each triangulation in $\calF(S')$ has at most $\zeta \cdot \zeta!$ neighbours and $\zeta$ and $\ell$ are fixed, this can all be done in constant time.
Moreover, this shows that $||\calT'_b(\partial N(t))|| \leq O(1)$.

Reversing the simplification process of $b'$ we are able to construct
\[ \{ \calT'_0(\partial N(t)) : t \subseteq \calT' \in T' \} \]
in $\poly(||b||)$ time.
Additionally, we have that $||\calT'_0(\partial N(t))|| \leq \poly(||b||)$.

Now, the effect of lifting a curve on $S'$ back to $S$ is linear \cite[Lemma~3.4.6]{BellThesis}.
That is, there is an integral matrix $M$ such that $\calT_0(m) = M \cdot \calT'_0(m')$.
The entries of $M$ are at most $\intersection(a, \calT_0)$.
Hence we can compute
\[ A_1 = \{ M \cdot \calT'_0(\partial N(t)) : t \subseteq \calT' \in T' \} \]
in $\poly(||a|| + ||b||)$ time and for each $m \in A_1$ the bound
\[ ||\calT_0(m)|| \leq \poly(||a|| + ||b||) \]
holds as required.
\end{proof}

For each $a_1 \in A_1$ we can then compute every tight path $(a_1, \ldots, a_\ell) \in P_{\ell-1}(a_1, b)$ by induction.
Then $P_\ell(a, b)$ consists of the multipaths $(a, a_1, \ldots, a_\ell)$, where $a_1 \in A_1$ and $(a_1, \ldots, a_\ell) \in P_{\ell-1}(a_1, b)$, which are tight.
Note that it is sufficient to check that $a_1 = \partial N(a \cup a_2)$ and $a$ and $a_i$ fill for each $i \geq 3$ to determine whether such a multipath is tight.
Furthermore, as $a \notin A_1$ and $(a_1, \ldots, a_\ell)$ has length $\ell-1$, all of the tight paths that we have created have length exactly equal to $\ell$ automatically.
Hence we can compute $P_\ell(a, b)$.

\begin{remark}
\label{rem:bounded_length_bounds}
Suppose that $\ell$ is fixed.
By applying Proposition~\ref{prop:a_1_bounds} repeatedly we see that if $(a_0, \ldots, a_\ell) \in P_\ell(a, b)$ then $||a_i|| \leq \poly(||a|| + ||b||)$.
However, due the recursive nature of this construction, a priori the degree of this polynomial depends on $\ell$.
\end{remark}

\begin{corollary}
Suppose that $\ell$ is fixed.
Algorithm~\ref{alg:tight_filling_paths} computes $P_\ell(a, b)$ in polynomial time. \qed
\end{corollary}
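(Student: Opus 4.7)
The plan is to induct on $\ell$. The base cases $\ell = 1$ and $\ell = 2$ are already addressed in the text: the conditions defining $P_1(a,b)$ and $P_2(a,b)$ reduce to testing disjointness, sharing of components, and computing $\partial N(a \cup b)$, all of which run in polynomial time by \cite[Section~4.3]{BellWebbSimplificationApplications} and the standard toolbox listed in Section~\ref{sub:complexity}. So I would treat the case $\ell \geq 3$ by the recursive description: first construct $A_1$, then for each $a_1 \in A_1$ recursively compute $P_{\ell-1}(a_1, b)$, and finally filter those $(a, a_1, \ldots, a_\ell)$ that satisfy the tightness conditions $a_1 = \partial N(a \cup a_2)$ and ``$a, a_i$ fill for $i \geq 3$''.

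The heart of the inductive step is Proposition~\ref{prop:a_1_bounds}, which already tells us two crucial things for $\ell$ fixed: $A_1$ has constant (in fact bounded only by $2^\zeta \cdot \zeta^{2\zeta\ell}$) cardinality, it can be computed in polynomial time, and each $m \in A_1$ satisfies $\|m\| \leq \poly(\|a\| + \|b\|)$. Feeding this into the recursion, the input sizes to the sub-problems $P_{\ell-1}(a_1, b)$ are polynomial in $\|a\| + \|b\|$, so by the inductive hypothesis each such sub-call runs in polynomial time and outputs tight paths whose multicurves also have polynomial complexity. Since the branching factor $|A_1|$ is constant (once $\ell$ is fixed) and the recursion depth is $\ell - 2$ (also constant), the total tree of sub-problems has constantly many leaves, and the dominant cost at each node is the construction of $A_1$ together with polynomially-many tightness checks.

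For the tightness filtering I would invoke the algorithms listed in Section~\ref{sub:complexity}: computing $\partial N(a \cup a_2)$ and comparing it with $a_1$ is polynomial (\cite[Section~4.3]{BellWebbSimplificationApplications} plus the curve-equality test), and checking that $a$ and $a_i$ fill is polynomial for each $i \geq 3$. Only constantly many such checks (at most $\ell$ per returned multipath, and polynomially many multipaths) occur, so the filtering does not dominate.

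The main obstacle I anticipate is bookkeeping the complexity blow-up through the recursion: Remark~\ref{rem:bounded_length_bounds} already warns that the exponent of the polynomial bound on $\|a_i\|$ depends on $\ell$. I would formalise this by letting $d_k$ denote the degree of the polynomial bound on the complexity of multicurves produced at recursion depth $k$, and showing $d_k \leq c \cdot d_{k-1}$ for some constant $c$ arising from Proposition~\ref{prop:a_1_bounds}, giving $d_\ell \leq c^\ell = O(1)$ once $\ell$ is fixed. A parallel recurrence bounds the running time: if $R_k(N)$ is the time to run the algorithm on inputs of complexity $N$ at depth $k$, then $R_k(N) \leq |A_1| \cdot R_{k-1}(\poly(N)) + \poly(N)$, which unrolls to a polynomial in $\|a\| + \|b\|$ whose degree depends only on $\ell$. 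Since $\ell$ is fixed by hypothesis, this is a polynomial bound, completing the induction.
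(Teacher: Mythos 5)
Your proposal is correct and matches the paper's argument: induct on $\ell$, handle the base cases directly, use Proposition~\ref{prop:a_1_bounds} for the inductive step's construction of $A_1$, note that fixed $\ell$ gives constant branching factor and recursion depth, and track the polynomial degree blow-up through the recursion (exactly as Remark~\ref{rem:bounded_length_bounds} warns). The explicit recurrences $d_k \leq c \cdot d_{k-1}$ and $R_k(N) \leq |A_1| \cdot R_{k-1}(\poly(N)) + \poly(N)$ are just a more formal rendering of what the paper leaves implicit.
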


\begin{algorithm}[ht]
\caption{Tight paths of given length.}
\label{alg:tight_filling_paths}
\begin{algorithmic}
\Require{Multicurves $a$ and $b$, and $\ell > 0$.}
\Ensure{$P_\ell(a, b)$}
	\If {$\ell = 1$} \Comment{Base case~(\ref{itm:base_1}).}
		\IIf {$\intersection(a, b) = 0$ and $a \cap b = \emptyset$}
			\Return $\{(a, b)\}$
		\EElse
			\Return $\emptyset$
		\EndIIf
	\ElsIf {$\ell = 2$} \Comment{Base case~(\ref{itm:base_2}).}
		\State $m \gets \partial N(a \cup b)$
		\IIf {$m \neq \emptyset$ and $m \cap (a \cup b) = \emptyset$}
			\Return $\{(a, m, b)\}$
		\EElse
			\Return $\emptyset$
		\EndIIf
	\Else \Comment{$\ell \geq 3$.}
	\IIf {$a$ and $b$ do not fill $S$}
		\Return $\emptyset$
	\EndIIf
	\State $A_1 \gets \emptyset$
	\State $b' \gets$ the multiarc induced on $S'$ by $b$ \Comment{$S'$ is $S$ crushed along $a$.}
	\State $\calT'_b \gets$ a triangulation containing $b'$
	\ForEach {triangulation $\calT' \in B_{\calF(S')}(\calT'_b, 2 \zeta \ell)$} \Comment{The ball about $\calT'_b$.}
		\ForEach {multiarc $t \subseteq \calT'$}
			\State $m' \gets \partial N(t)$
			\State $m \gets$ the lift of $m'$ back to $S$
			\State $A_1 \gets A_1 \cup \{m\}$
		\EndFor
	\EndFor
	\State $P \gets \emptyset$ \Comment{This will be $P_\ell(a, b)$ when we finish.}
	\ForEach {$a_1 \in A_1$}
		\ForEach {multipath $(a_1, a_2, \ldots, a_\ell) \in P_{\ell-1}(a_1, b)$} \Comment {Recurse.}
			\If {$(a, a_1, \ldots, a_\ell)$ is a tight path}
				\State $P \gets P \cup \{(a, a_1, \ldots, a_\ell)\}$
			\EndIf
		\EndFor
	\EndFor
	\State \Return $P$
	\EndIf
\end{algorithmic}
\end{algorithm}

\begin{remark}
If $\ell = d_{\calC(S)}(a, b)$ then $P_\ell(a, b)$ contains a tight geodesic from $a$ to $b$.
Unfortunately Algorithm~\ref{alg:tight_filling_paths} is at least exponential in $\ell$ and there are examples in which $d_{\calC(S)}(a, b) \approx ||a|| + ||b||$.
Therefore it alone cannot be used to compute a tight geodesic between any pair of curves in polynomial time.
\end{remark}

\subsection{Tight geodesics}

We now describe how to compute a tight geodesic between an arbitrary pair of curves $a$ and $b$ in polynomial time.
(See Algorithm~\ref{alg:geodesic}.)

We start by using Algorithm~\ref{alg:quasiconvex} to build a $K'$--quasiconvex subset $U$ of $\calC(S)$ containing $a$ and $b$ to guide our construction.
In order for this process to complete in polynomial time it is important that $U$ only contains polynomially many curves.

Now any tight geodesic between $a$ and $b$ is contained within a uniform neighbourhood of $U$.
Unfortunately, there are infinitely many curves in such a neighbourhood of $U$ and so we use a further refinement.

\begin{theorem}[{\cite[Theorem~6.2.3]{WebbThesis} \cite[Theorem~5.7]{WebbCombinatorics}}]
\label{thrm:tight_neighbourhood}
There is a computable constant $L$ such that any tight geodesic from $a$ to $b$ only involves multicurves in
\[ V \defeq \bigcup_{i=1}^L \bigcup_{u,u' \in U} P_i(u, u') \]
where this union is of all the multicurves of all of the multipaths in $P_i(u, u')$. \qed
\end{theorem}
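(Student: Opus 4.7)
The plan is to show that any multicurve appearing on a tight geodesic $\alpha = (a_0, \ldots, a_n)$ from $a$ to $b$ belongs to a tight path of uniformly bounded length between two curves in $U$. The approach combines the $K'$-quasiconvexity of $U$ from the preceding section with the $\delta$-hyperbolicity of $\calC(S)$, the Masur--Minsky existence theorem for tight geodesics, and their bounded geodesic image theorem.

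Fix an index $i$. By $\delta$-hyperbolicity and $K'$-quasiconvexity, there is a universal constant $R$ so that every $a_j$ lies within $\dC$-distance $R$ of some $u_j \in U$; here one may take $u_0 = a$ and $u_n = b$. Choose a constant $M$ (to be specified below) and put $u \defeq u_{\max(0, i-M)}$ and $u' \defeq u_{\min(n, i+M)}$. The goal is to build a tight path from $u$ to $u'$, of length at most $L \defeq 2M + 2R$, which contains $a_i$ as a vertex. I would do this by splicing together three tight geodesics, all guaranteed to exist by Masur--Minsky: a tight geodesic from $u$ to $a_{\max(0, i-M)}$ of length at most $R$; the tight subgeodesic of $\alpha$ from $a_{\max(0,i-M)}$ through $a_i$ to $a_{\min(n, i+M)}$; and a tight geodesic from $a_{\min(n, i+M)}$ to $u'$ of length at most $R$.

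The principal obstacle is that the concatenation is only piecewise tight: global tightness demands both that every interior vertex equal $\partial N$ of its two neighbours and that vertices at distance at least three apart fill $S$. The first condition can fail at the two splice vertices, but is restored by replacing each splice vertex with $\partial N$ of its two neighbours, a local adjustment altering only a bounded portion of the concatenation. The second condition holds automatically inside each piece and, across the splices, is enforced by taking $M$ larger than the bounded geodesic image constant: the bounded geodesic image theorem then shows that a common non-filling curve for two vertices of the concatenation on opposite sides of $a_i$ would pull back to a bounded subsurface projection along the original $\alpha$, contradicting that those vertices are at least three steps apart in the tightened concatenation. For such $M$ the construction yields the desired tight path of length at most $L$ containing $a_i$, giving $a_i \in P_\ell(u, u') \subseteq V$ for some $\ell \leq L$.
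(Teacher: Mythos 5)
Your overall strategy---use quasiconvexity and hyperbolicity to find $u, u' \in U$ near a bounded window of $\alpha$ around $a_i$, then splice tight geodesics from $u$ and $u'$ to the ends of that window---is a natural first attempt, but both halves of the tightening step have real gaps.

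The filling condition is the more serious one. The requirement that $v$ and $w$ fill whenever $|j-k| \geq 3$ (where $j,k$ are their indices in the multipath) is automatic for geodesic multipaths, because there $\dC(v,w) = |j-k|$; it is precisely what can fail for a spliced path, which is not geodesic. Your argument, whatever it is (and the invocation of the bounded geodesic image theorem is not a coherent use of that theorem---at best what you want is a direct distance/thin-triangle estimate), only addresses pairs on opposite sides of $a_i$, where a large choice of $M$ does help. It says nothing about pairs on the same side straddling a splice point. Concretely, suppose the left connector $u = b_0, \ldots, b_{R_1} = a_{\max(0,i-M)}$ has length $R_1 \geq 1$ and $u$ happens to be at distance $\leq 2$ from $a_{\max(0,i-M)+2}$; this is entirely possible since the only constraint on $u$ is $\dC(u, a_{\max(0,i-M)}) \leq R$. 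Then $u$ and $a_{\max(0,i-M)+2}$ do not fill, yet their index-distance in the spliced path is $R_1 + 2 \geq 3$. The path fails to be tight, and applying $\partial N$ at various vertices does not change any indices or distances, so it cannot repair this.

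The $\partial N$ condition is also not a "local adjustment." Replacing the splice vertex $a_{\max(0,i-M)}$ by $\partial N(b_{R_1-1} \cup a_{\max(0,i-M)+1})$ changes that multicurve, which can break the $\partial N$ condition at index $\max(0,i-M)+1$, and so on inward. Masur--Minsky's Lemma 4.5 guarantees this process terminates in a tight path only when the input is a geodesic multipath, which the splice is not; and even granting termination, nothing you have said bounds the cascade away from $a_i$, so the resulting tight path may no longer contain $a_i$. Both issues need to be addressed; the paper is right to cite [WebbThesis, Theorem 6.2.3] and [WebbCombinatorics, Theorem 5.7] for the actual proof rather than leave it as an exercise.
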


Again, $L$ depends only on $K'$ and so is also a universal constant.

We compute $V$ by repeatedly using Algorithm~\ref{alg:tight_filling_paths} for each pair of curves in $U$.
By Lemma~\ref{lem:quasiconvex_bounds} and Remark~\ref{rem:bounded_length_bounds}, we have that $||v|| \leq \poly(||a|| + ||b||)$ for each $v \in V$.
Therefore we can also compute
\[ E \defeq \{(v, v') \in V \times V : \intersection(v, v') = 0 \; \textrm{and} \; v \cap v' = \emptyset\}, \]
which consists of all pairs of curves in $V$ that can appear consecutively in a multipath, in polynomial time \cite[Corollary~2.6]{BellWebbSimplificationApplications}.

Now let $G$ be the graph with vertex set $V$ and edge set $E$.
By construction $G$ contains all tight geodesics from $a$ to $b$.
Hence, if we use a pathfinding algorithm on $G$ to construct a geodesic $a_0, \ldots, a_n$ from $a$ to $b$ in $G$ then this sequence of multicurves will also be a geodesic multipath from $a$ to $b$.

However, $G$ may also contain non-tight geodesics and so the geodesic multipath we obtain is not necessarily tight.
Thus we finish by tightening the multicurves in the order $a_1, \ldots, a_{n-1}$ to obtain a tight geodesic from $a$ to $b$ \cite[Lemma~4.5]{MasurMinskyII}.

From the complexity bounds of Remark~\ref{rem:geodesic_bounds} it follows that this tightening can be done in polynomial time and so:

\begin{corollary}
Algorithm~\ref{alg:geodesic_path} computes a tight geodesic from $a$ to $b$ in polynomial time. \qed
\end{corollary}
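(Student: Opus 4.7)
The plan is to verify that the algorithm sketched in the preceding discussion---build $U$, then $V$, then an auxiliary graph $G$, find a shortest path in $G$, and finally tighten---correctly outputs a tight geodesic in polynomial time. All of the substantive work has already been done in earlier sections, so the proof is essentially a bookkeeping argument confirming that the complexity bounds propagate correctly through each stage and that the output is indeed tight.

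First, I would invoke Algorithm~\ref{alg:quasiconvex} to produce a $K'$--quasiconvex subset $U \subseteq \calC(S)$ containing $a$ and $b$. Since that algorithm terminates in polynomial time we automatically get $|U| \leq \poly(||a|| + ||b||)$, and Lemma~\ref{lem:quasiconvex_bounds} gives $||c|| \leq \poly(||a|| + ||b||)$ for every $c \in U$. Next, I would form $V = \bigcup_{i=1}^L \bigcup_{u,u' \in U} P_i(u, u')$ by running Algorithm~\ref{alg:tight_filling_paths} on each of the $O(L \cdot |U|^2)$ triples $(u, u', i)$; as $L$ is a universal constant and each call is polynomial, this step is polynomial overall. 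Remark~\ref{rem:bounded_length_bounds} then yields $||v|| \leq \poly(||a|| + ||b||)$ for every $v \in V$, and $|V|$ is polynomial in $||a|| + ||b||$. I would then compute the edge set $E$ by testing $\intersection(v, v') = 0$ and $v \cap v' = \emptyset$ on every pair in $V \times V$ using the polynomial-time intersection-number algorithm cited in Section~\ref{sub:complexity}, which is polynomial since both $|V|$ and each $||v||$ are polynomial.

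A breadth-first search in the finite graph $G = (V, E)$ returns a shortest path $a = a_0, a_1, \ldots, a_n = b$, which is automatically a geodesic multipath in $\calC(S)$. Theorem~\ref{thrm:tight_neighbourhood} guarantees that $G$ contains some tight geodesic from $a$ to $b$, so $n = d_{\calC(S)}(a, b)$ and the multipath found by BFS realises the curve graph distance. The main remaining obstacle is that this geodesic multipath need not itself be tight. To address this I would apply the Masur--Minsky tightening procedure \cite[Lemma~4.5]{MasurMinskyII}, replacing each $a_i$ (for $i = 1, \ldots, n-1$) in order by $\partial N(a_{i-1} \cup a_{i+1})$. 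This operation preserves the geodesic property and yields a tight geodesic, while the complexity inequality $||\partial N(c \cup c')|| \leq \max(||c||, ||c'||) + 2$ recalled in Section~\ref{sub:complexity} prevents any blow-up, so the tightening runs in polynomial time. Concatenating these polynomial-time steps produces a tight geodesic from $a$ to $b$ in polynomial time, as required.
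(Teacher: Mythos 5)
Your argument follows the paper's proof exactly: build the quasiconvex guide $U$, assemble $V$ and $E$ via Algorithm~\ref{alg:tight_filling_paths} and Theorem~\ref{thrm:tight_neighbourhood}, find a shortest path in the finite graph $G$, then tighten via \cite[Lemma~4.5]{MasurMinskyII}, tracking the polynomial bounds on size and bit-complexity of the data at each stage. The only cosmetic difference is that you bound the complexity of the tightening step directly from the inequality $||\partial N(c \cup c')|| \leq \max(||c||, ||c'||) + 2$ rather than by citing Remark~\ref{rem:geodesic_bounds}, which is a perfectly valid shortcut.
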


\begin{algorithm}[ht]
\caption{A tight geodesic.}
\label{alg:geodesic_path}
\begin{algorithmic}
\Require{Curves $a$ and $b$.}
\Ensure{A tight geodesic from $a$ to $b$.}
	\State $U \gets$ a $K$--quasiconvex subset of $\calC(S)$ containing $a$ and $b$ \Comment{Use Algorithm~\ref{alg:quasiconvex}.}
	\State $V \gets \emptyset$
	\For {$i \gets 1,\ldots,L$}
		\ForEach {$u, u' \in U \times U$}
			\ForEach {$(a_0, \ldots, a_i) \in P_i(u, u')$} \Comment{Use Algorithm~\ref{alg:tight_filling_paths}.}
				\State $V \gets V \cup \{a_0, \ldots, a_i\}$
			\EndFor
		\EndFor
	\EndFor
	\State $E \gets \{ (v, v') \in V \times V: \intersection(v, v') = 0 \; \textrm{and} \; v \cap v' = \emptyset \}$
	\State $G \gets$ Graph $(V, E)$
	\State $a_0, \ldots, a_n \gets$ a geodesic in $G$ from $a$ to $b$  \Comment{Might not be tight.}
	\For {$i \gets 1, \ldots, n-1$}
		\State $a_i \gets \partial N(a_{i-1} \cup a_{i+1})$  \Comment{Afterwards $a_{i-1}$ is still tight.}
	\EndFor
	\State \Return $a_0, \ldots, a_n$
\end{algorithmic}
\end{algorithm}

\begin{remark}
Although $G$ only has polynomially many vertices, it can of course contain exponentially many geodesics from $a$ to $b$.
It is not possible to provide all tight geodesics from $a$ to $b$ in polynomial time.
\end{remark}

\subsection{Geodesics}

We can finally give a polynomial-time algorithm to compute a geodesic in $\calC(S)$ between a pair of curves $a$ and $b$.
(See Algorithm~\ref{alg:geodesic}.)

We start by constructing a geodesic multipath from $a$ to $b$.
We then obtain a geodesic from $a$ to $b$ in $\calC(S)$ by simply choosing an induced path of this multipath.
As we can extract the components of a multicurve in polynomial time, this can also be done in polynomial time.

\begin{remark}
\label{rem:geodesic_bounds}
By Lemma~\ref{lem:quasiconvex_bounds} and Remark~\ref{rem:bounded_length_bounds}, if $a_0, \ldots, a_n$ is a tight geodesic from $a$ to $b$ obtained by this construction then $||a_i|| \leq \poly(||a|| + ||b||)$.
However $L$ being universal only implies that the degree of this polynomial is universal.
The coefficients still depend (only) on $S$.

This shows that the general problem, in which the surface is allowed to vary and is given as part of the input, is \emph{fixed parameter tractable} \cite[Section~2.1]{DowneyFellows}.
\end{remark}

\begin{corollary}
Algorithm~\ref{alg:geodesic} computes a geodesic in $\calC(S)$ from $a$ to $b$ in polynomial time. \qed
\end{corollary}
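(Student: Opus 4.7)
The plan is to reduce this to the tight-geodesic construction already in hand. First I would invoke Algorithm~\ref{alg:geodesic_path} to produce, in polynomial time, a tight geodesic multipath $a_0, a_1, \ldots, a_n$ from $a$ to $b$. By the complexity bound recorded in Remark~\ref{rem:geodesic_bounds}, each multicurve $a_i$ satisfies $||a_i|| \leq \poly(||a|| + ||b||)$, so the output of Algorithm~\ref{alg:geodesic_path} is of polynomial size in the input.

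Next I would convert this multipath into an honest path in $\calC(S)$ by selecting, for each $i$, a single component $c_i \subseteq a_i$, setting $c_0 \defeq a$ and $c_n \defeq b$. By the definition of a multipath, consecutive multicurves $a_i$ and $a_{i+1}$ are disjoint and share no components, hence $\intersection(c_i, c_{i+1}) = 0$ and $c_i \neq c_{i+1}$, so $c_0, c_1, \ldots, c_n$ is an honest edge-path in $\calC(S)$. Because $a_0, \ldots, a_n$ is a geodesic multipath, every induced path is by definition a geodesic in $\calC(S)$, so $c_0, \ldots, c_n$ realises $\dC(a, b)$.

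It remains to check that the component selection can be performed in polynomial time. This is one of the basic routines listed in Section~\ref{sub:complexity}, where we recalled that the components (and multiplicities) of a multicurve given by its edge vector can be extracted in polynomial time in $||a_i||$. Since each $||a_i||$ is polynomial in $||a|| + ||b||$ and there are at most $\poly(||a|| + ||b||)$ terms in the geodesic (as $\dC(a, b) \leq \poly(||a|| + ||b||)$ by Hempel's logarithmic bound on distance in terms of intersection number), the total cost of the extraction step is polynomial.

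The only mild subtlety, and essentially the whole content of the statement, is that no genuine new work is needed beyond Algorithm~\ref{alg:geodesic_path}: the multipath-to-path conversion is definitional, and its polynomial-time implementation is already subsumed by the component-extraction primitive. Thus Algorithm~\ref{alg:geodesic} --- run Algorithm~\ref{alg:geodesic_path}, then return an induced path obtained by picking a component from each multicurve --- outputs a geodesic in $\calC(S)$ from $a$ to $b$ in polynomial time.
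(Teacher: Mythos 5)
Your proof is correct and takes essentially the same approach as the paper: run Algorithm~\ref{alg:geodesic_path} to obtain a tight geodesic multipath, then pass to an induced path by extracting components, which is a polynomial-time primitive. The additional remarks you include (the Hempel bound on $n$, the observation that $c_0 = a$ and $c_n = b$ automatically since $a,b$ are curves) are sound but not needed beyond what the paper already records.
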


\begin{algorithm}[ht]
\caption{A geodesic in $\calC(S)$.}
\label{alg:geodesic}
\begin{algorithmic}
\Require{Curves $a$ and $b$.}
\Ensure{A geodesic in $\calC(S)$ from $a$ to $b$.}
	\State $a_0, \ldots, a_n \gets$ a tight geodesic from $a$ to $b$ \Comment{Use Algorithm~\ref{alg:geodesic_path}.}
	\For {$i \gets 0,\ldots,n$}
		\State $c_i \gets$ a component of $a_i$
	\EndFor
	\State \Return $c_0, \ldots, c_n$
\end{algorithmic}
\end{algorithm}

\begin{remark}
Once more, although a tight geodesic from $a$ to $b$ only has polynomially many terms, it can of course contain exponentially many induced paths that are geodesic.
Again, it is not possible to provide all geodesics from $a$ to $b$ in $\calC(S)$ in polynomial time.
\end{remark}

\begin{remark}
By induction, our methods should also be able to provide a Masur--Minsky hierarchy of tight geodesics \cite[Theorem~4.6]{MasurMinskyII} in polynomial time.
\end{remark}

\section{Asymptotic translation lengths}
\label{sec:asymp_trans_len}

The mapping class group $\Mod(S)$ has a natural action on $\calC(S)$.
Given a mapping class $f \in \Mod(S)$, its \emph{asymptotic translation length} is
\[ \translationlength(f) \defeq \liminf_{i \to \infty} \frac{\dC(c, f^i(c))}{i} \]
where $c \in \calC(S)$.
This definition is independent of the choice of $c$.

In this section we use our polynomial-time algorithm for computing geodesics to give a polynomial-time algorithm to compute $\translationlength(f)$.
(See Algorithm~\ref{alg:asymptotic_translation_length}.)

We will assume throughout that $f$ is aperiodic as otherwise $\translationlength(f) = 0$ automatically.

This limit is always in fact an integral multiple of $1/D$ where $D = D(S)$ is a constant depending only on $S$ \cite[Corollary~1.5]{BowditchTight}.
This constant is explicitly bounded above by a computable constant which is at most a doubly exponential function of $\xi$ \cite[Theorem~6.3]{WebbCombinatorics}.
We note two important consequences of this choice of $D$.
First, if $\translationlength(f) \neq 0$ then $\translationlength(f) \geq 1/D$.
Second, if $\translationlength(f) \neq \translationlength(g)$ then $|\translationlength(f) - \translationlength(g)| \geq 1 / D$.

To compute $\translationlength(f)$ when $f$ is aperiodic, we set $B$ to be the constant of the bounded geodesic image theorem \cite[Theorem~3.1]{MasurMinskyII}.
The second author showed that $B$ is universal and can be taken to be $100$ \cite[Corollary~1.3]{WebbUniformBoundedGeodesic} \cite[Theorem~4.1.8]{WebbThesis}.
Now let
\[ M = M(S) \defeq 28 (\xi!) \delta B D. \]

Let $c \in \calC(S)$ be a curve and let $\gamma$ be a geodesic from $c$ to $f^M(c)$.
Finally, let $c'$ be a midpoint of $\gamma$.
We claim that if we round $\dC(c', f^M(c')) / M$ to the nearest $1 / D$ then we obtain $\translationlength(f)$.
To show this we consider two different possibilities:

\subsection{Non-zero translation}

First, suppose that $\translationlength(f) \neq 0$.
In this case $f$ cannot be reducible and so it must be pseudo-Anosov.
Let $\alpha$ be an \emph{axis} for $f^M$, that is, a biinfinite geodesic such that $f^M(\alpha) = \alpha$ \cite[Theorem~1.4]{BowditchTight}.

\begin{proposition}
\label{prop:midpoint_axis_distance}
The curve $c'$ is at most $2 \delta$ away from $\alpha$.
\end{proposition}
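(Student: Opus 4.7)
The plan is to project $c$ and $f^M(c)$ to their nearest points $p$ and $q$ on $\alpha$ and apply the slim-quadrilateral property of the $\delta$-hyperbolic space $\calC(S)$ to the quadrilateral with vertices $c, p, q, f^M(c)$. Since $f^M$ preserves $\alpha$, $q = f^M(p)$ and $r \defeq \dC(c,p) = \dC(f^M(c),q)$. Geodesic quadrilaterals are $2\delta$-slim, so the midpoint $c'$ of $\gamma$ lies within $2\delta$ of one of the three other sides: $[c,p]$, the sub-axis $[p,q] \subseteq \alpha$, or $[q, f^M(c)]$. The goal is to show that only the middle case can occur, in which case $\dC(c', \alpha) \leq 2\delta$ as required.

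Suppose $c'$ were within $2\delta$ of $[c,p]$ (the other off-axis case is symmetric under the $c \leftrightarrow f^M(c)$ swap). Because $[c,p]$ has length $r$ and $\dC(c, c') = L/2$ with $L \defeq \dC(c, f^M(c))$, I would conclude $L/2 \leq r + 2\delta$, i.e.\ $L \leq 2r + 4\delta$. I pair this with a standard lower bound in hyperbolic spaces: the broken path $[c,p] \cup [p,q] \cup [q, f^M(c)]$ meets $\alpha$ at its two nearest-point projections, so once $\dC(p,q)$ exceeds a universal constant it is a uniform local (hence global) quasi-geodesic, giving $L \geq \dC(p,q) + 2r - C\delta$ for some universal constant $C$. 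Combining the two bounds forces $\dC(p,q) \leq (C + 4)\delta$.

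This is the contradiction: $\dC(p,q)$ is precisely the translation length of $f^M$ along its invariant axis, which equals $M \cdot \translationlength(f)$. Since we are in the non-zero translation case, $f$ is pseudo-Anosov and Bowditch's rationality theorem gives $\translationlength(f) \geq 1/D$, so $\dC(p,q) \geq M/D = 28 (\xi!) \delta B$, dwarfing any universal constant multiple of $\delta$. Hence the `bad' cases of the slim-quadrilateral dichotomy are impossible and $c'$ must lie $2\delta$-close to $[p,q] \subseteq \alpha$.

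The main obstacle is supplying the quasi-geodesic lower bound $L \geq \dC(p,q) + 2r - C\delta$ with a universal constant $C$. This is a routine consequence of $\delta$-hyperbolicity together with the fact that a path which turns at nearest-point projections of a geodesic has uniformly bounded backtracking; the care needed is to ensure $C$ depends only on $\delta$, and not on $r$ or on the particular pseudo-Anosov $f$, so that the final comparison with $M/D$ goes through for every curve $c$.
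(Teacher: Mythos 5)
Your approach matches the paper's proof essentially step for step: project $c$ and $f^M(c)$ to the axis, invoke $2\delta$-slimness of the quadrilateral, and rule out the two ``off-axis'' sides by combining the upper bound coming from assuming $c'$ is near one of them with a lower bound of the form $\dC(c,f^M(c)) \geq \dC(p,q) + 2r - C\delta$. The paper supplies the concrete constant you flag as the remaining obstacle by citing \cite[Proposition~3.4]{Maher}, which gives $C = 24$, and then concludes with exactly the comparison to $M/D = 28(\xi!)\delta B$ that you describe. The only trivial gap is that you write $\dC(c,c') = L/2$; the paper accounts for the possibly odd length of $\gamma$ by writing $\dC(c_0,c_M) \leq 2\dC(c_0,c') + 1$, which is why the final contradiction is stated as $M/D \leq 28\delta + 1$ rather than $M/D \leq 28\delta$.
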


\begin{proof}
For ease of notation let $c_0 \defeq c$ and $c_M \defeq f^M(c)$.
Additionally let $x_0$ and $x_M$ be nearest point projections of $c_0$ and $c_M$ to $\alpha$ respectively.
Without loss of generality we may take $x_M = f^M(x_0)$.
See Figure~\ref{fig:trans_len_irreducible}.

An exercise in hyperbolic geometry shows that, since $x_0$ and $x_M$ are nearest point projections to a geodesic,  
\[ \dC(c_0, c_M) \geq \dC(c_0, x_0) + \dC(x_0, x_M) + \dC(x_M, c_M) - 24 \delta. \]

See for example \cite[Proposition~3.4]{Maher}.
Since $\dC(c_M, x_M) = \dC(c_0, x_0)$ and $\dC(x_0, x_M) \geq M / D$ we obtain that
\[ \dC(c_0, c_M) \geq 2 \dC(c_0, x_0) + M / D - 24 \delta. \]

Now consider the quadrilateral formed by geodesics $[c_0, x_0]$, $[x_0, x_M]$, $[x_M, c_M]$ and $[c_M, c_0]$.
We have that $c' \in [c_0, c_M]$ and so $c'$ must be in a $2\delta$--neighbourhood of one of the other three sides of this quadrilateral.

If $c' \in N_{2 \delta}([c_0, x_0])$ then
\[ \dC(c_0, c_M) \leq 2 \dC(c_0, c') + 1 \leq 2 \dC(c_0, x_0) + 4 \delta + 1. \]
From this we deduce that $M / D \leq 28 \delta + 1$, which contradicts our choice of $M$.  

The same argument shows that $c' \notin N_{2 \delta}([c_M, x_M])$ and so $c' \in N_{2 \delta}([x_0, x_M])$ as required.
\end{proof}

\begin{figure}[ht]
\centering
\begin{tikzpicture}[scale=2,thick]

\coordinate (c0) at (-1, 1);
\coordinate (v) at (-0.65, 0.05);
\coordinate (cp) at (0, -0.05);
\coordinate (v2) at (0.65, 0.05);
\coordinate (cM) at (1, 1);

\draw [->, red] (-2,-0.25) -- (2,-0.25) node [right] {$\alpha$};

\draw [->, blue] (c0) to [out=-85,in=160]
	(v) to [out=-20,in=180]
	(cp) to [out=0,in=200] (v2) node [above] {$\gamma$};
\draw [blue]
	(v2) to [out=20,in=265]
	(cM);

\node [dot, label=above:{$c_0 = c$}] at (c0) {};
\node [dot, label=above:{$c'$}] at (cp) {};
\node [dot, label=above right:{$c_M = f^M(c)$}] at (cM) {};

\node [dot,label=below:{$x_0$}] (x0) at (-1,-0.25) {};
\node [dot,label=below right:{$x_M = f^M(x_0)$}] (xM) at (1,-0.25) {};

\draw [decorate,decoration={brace,amplitude=3pt,mirror,raise=4pt}] (0,-0.25) -- node [right,xshift=6pt,yshift=-0pt] {$\leq 2 \delta$} (cp);

\draw [decorate,decoration={brace,amplitude=5pt,mirror,raise=4pt}] (x0) -- node [below,xshift=0pt,yshift=-12pt] {$\geq M / D$} (xM);
\draw [decorate,decoration={brace,amplitude=5pt,mirror,raise=4pt}] (c0) -- node [left,xshift=-8pt,yshift=0pt] {$d$} (x0);
\draw [decorate,decoration={brace,amplitude=5pt,raise=4pt}] (cM) -- node [right,xshift=8pt,yshift=0pt] {$d$} (xM);

\end{tikzpicture}
\caption{The midpoint of $[c, f^M(c)]$ is close to $\alpha$.}
\label{fig:trans_len_irreducible}
\end{figure}
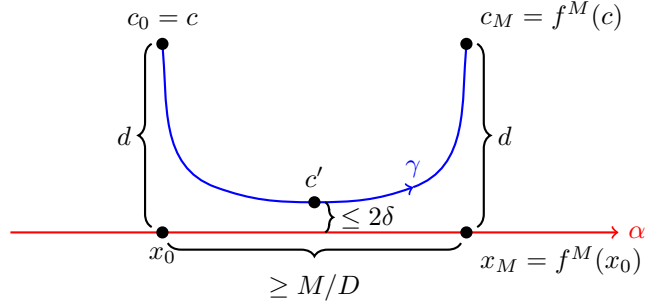

\begin{proposition}
$\frac{1}{M} (\dC(c', f^M(c')) - 4 \delta) \leq \translationlength(f)$.
\end{proposition}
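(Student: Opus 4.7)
The plan is to leverage the axis $\alpha$ produced for the pseudo-Anosov $f^M$ together with the fact (established in Proposition~\ref{prop:midpoint_axis_distance}) that $c'$ lies within $2\delta$ of $\alpha$. Let $y_0 \in \alpha$ be a nearest point projection of $c'$ to $\alpha$, so $\dC(c', y_0) \leq 2\delta$. Since $f^M$ is an isometry of $\calC(S)$ and fixes $\alpha$ setwise, $f^M(y_0)$ also lies on $\alpha$, and $\dC(f^M(c'), f^M(y_0)) = \dC(c', y_0) \leq 2\delta$.

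Next, because $f^M$ acts as a translation along its axis $\alpha$, the two points $y_0$ and $f^M(y_0)$ on $\alpha$ satisfy
\[ \dC(y_0, f^M(y_0)) = \translationlength(f^M) = M \cdot \translationlength(f), \]
where the second equality is a standard fact about asymptotic translation lengths (the sequence defining $\translationlength(f)$ is subadditive, and for pseudo-Anosov mapping classes its liminf is realised as a limit along the axis).

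Finally, applying the triangle inequality along the path $c' \to y_0 \to f^M(y_0) \to f^M(c')$ gives
\[ \dC(c', f^M(c')) \leq \dC(c', y_0) + \dC(y_0, f^M(y_0)) + \dC(f^M(y_0), f^M(c')) \leq 2\delta + M \translationlength(f) + 2\delta. \]
Rearranging this and dividing by $M$ yields the stated inequality. The only potentially delicate step is the identification $\translationlength(f^M) = M \translationlength(f)$ together with the fact that this quantity is realised exactly as the translation distance along $\alpha$; both follow from the existence of the axis invoked from \cite{BowditchTight} just before Proposition~\ref{prop:midpoint_axis_distance}.
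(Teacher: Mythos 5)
Your proof is correct and follows essentially the same route as the paper: project $c'$ to the axis $\alpha$, transport by $f^M$, use $\dC(y_0,f^M(y_0)) = \translationlength(f^M) = M\translationlength(f)$, and close with the triangle inequality together with the $2\delta$ bound from Proposition~\ref{prop:midpoint_axis_distance}. The only cosmetic difference is that you take $f^M(y_0)$ as the projection of $f^M(c')$ outright (justified by $f^M$ being an isometry preserving $\alpha$), whereas the paper introduces $y_M$ and observes one may take $y_M = f^M(y_0)$ without loss of generality.
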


\begin{proof}
Again, for ease of notation let $c'_0 \defeq c'$ and $c'_M \defeq f^M(c')$.
Additionally let $y_0$ and $y_M$ be nearest point projections of $c'_0$ and $c'_M$ to $\alpha$ respectively.
Once more, without loss of generality we may take $y_M = f^M(y_0)$.

We note that, as $\alpha$ is a geodesic which is invariant under $f^M$, we have that
\[ M \translationlength(f) = \translationlength(f^M) = \dC(y_0, y_M). \]
However, by Proposition~\ref{prop:midpoint_axis_distance} we have that $\dC(c'_0, y_0), \dC(c'_M, y_M) \leq 2 \delta$ and so
\[ \dC(y_0, y_M) \geq \dC(c'_0, c'_M) - 4 \delta. \]
Hence
\[ \translationlength(f) \geq \frac{\dC(c'_0, c'_M) - 4 \delta}{M} \]
as required.
\end{proof}

On the other hand, $\translationlength(f) \leq \dC(c', f^M(c')) / M$ trivially.
Combining these we obtain that
\[ \left|\frac{\dC(c', f^M(c'))}{M} - \translationlength(f)\right| \leq \frac{4 \delta}{M} < \frac{1}{2 D}. \]  
Hence if we round $\dC(c', f^M(c')) / M$ to the nearest $1 / D$ then we obtain $\translationlength(f)$.

\subsection{Zero translation}
\label{sub:asym_trans_len_reducible}

Second, suppose that $\translationlength(f) = 0$.

\begin{definition}[{\cite[Page~373]{FarbMargalit}}]
The \emph{canonical curve system} $\canonical(f)$ of a mapping class $f$ is the intersection of all maximal $f$--invariant multicurves.
\end{definition}

In this case $f$ must be reducible \cite[Proposition~4.6]{MasurMinskyI} and so $\canonical(f) \neq \emptyset$.
Our choice of $M$ ensures that $f^M$ fixes all components of $\canonical(f)$, including orientation.
Now recall that a component of $S - \canonical(f)$ is said to be a (non-annular) \emph{active subsurface} if the first return map of $f$ on it is pseudo-Anosov \cite[Lecture~9]{MosherLecture}.
Additionally $N(v)$, where $v \subseteq \canonical(f)$ is a curve, is an (annular) \emph{active subsurface} if the first return maps of $f$ on the components of $S - \canonical(f)$ adjacent to $v$ are all periodic \cite[Lecture~9]{MosherLecture}.

Let $m \subseteq \canonical(f)$ be a curve.

\begin{proposition}
\label{prop:reducible_distance}
The curve $c'$ is distance at most two from $m$.
\end{proposition}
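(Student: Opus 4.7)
The plan is to run an argument analogous to the pseudo-Anosov case of Proposition~\ref{prop:midpoint_axis_distance}, with the $f^M$-invariant curve $m$ playing the role of the axis $\alpha$, and to use the bounded geodesic image theorem to force the geodesic $\gamma$ to pass near $m$. Set $L \defeq \dC(c, m)$, which equals $\dC(f^M(c), m)$ since $f^M(m) = m$, and write $\gamma = (c_0, \ldots, c_n)$ with $c_0 = c$ and $c_n = f^M(c)$, so that $c'$ is one of $c_{\lfloor n/2 \rfloor}$ and $c_{\lceil n/2 \rceil}$.

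I would first dispose of the case $f^M(c) = c$. The $\xi!$ factor in $M$ ensures that $f^M$ acts as the identity on every periodic piece of $S \setminus \canonical(f)$, so $c$ must lie either in $\canonical(f)$ itself or in such a periodic piece; in either event $c = c'$ is disjoint from some component of $\canonical(f)$, and choosing $m$ to be that component gives $\dC(c', m) \leq 1$.

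In the main case $f^M(c) \neq c$, the curve $c$ must project non-trivially to some active subsurface $Y$ (either a non-annular pseudo-Anosov component of $S \setminus \canonical(f)$ or an annular active $Y = N(v)$), and I would take $m$ to be a component of $\partial Y \subseteq \canonical(f)$. The first return map of $f$ on $Y$ has positive translation length bounded below by $1/D$ (allowing a mild enlargement of $D$ uniformly over subsurface types), and the $\xi!$ factor in $M$ ensures that $f^M$ acts on $Y$ as a power of this first return map. The resulting subsurface-projection distance satisfies
\[ d_Y\bigl(\pi_Y(c), \pi_Y(f^M(c))\bigr) \geq \tfrac{M}{\xi!} \cdot \tfrac{1}{D} - O(1) > B \]
by our choice of $M = 28(\xi!)\delta B D$. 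By the bounded geodesic image theorem there is then an index $k$ with $\dC(c_k, \partial Y) \leq 1$, and in particular $\dC(c_k, m) \leq 1$.

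Finally, I would pin $k$ near $n/2$ using the isoceles triangle on $c, f^M(c), m$. Combining $\dC(c_k, m) \leq 1$ with $\dC(c, m) = \dC(f^M(c), m) = L$ yields
\[ \max(L-1,\, n-L-1) \leq k \leq \min(L+1,\, n-L+1), \]
which together with the triangle inequality $n \leq 2L$ restricts $n$ to $\{2L-2, 2L-1, 2L\}$ and forces $|k - n/2| \leq 1$. Hence $\dC(c', c_k) \leq 1$, and therefore
\[ \dC(c', m) \leq \dC(c', c_k) + \dC(c_k, m) \leq 2. \]
The main obstacle is checking that the active subsurface $Y$ can always be chosen so that $c$ projects to it non-trivially whenever $f^M(c) \neq c$, and that $M$ is large enough in both the non-annular and the annular cases; the annular case is the delicate one since the intrinsic twist number of $f$ around $v$ could be small, but this is precisely why the factor $D$ appears in the definition of $M$.
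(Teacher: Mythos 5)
The core ingredients you use --- the bounded geodesic image theorem to force $\gamma$ to pass close to $m$, followed by the isoceles-triangle bookkeeping to pin that vertex near the midpoint --- are exactly the paper's, and your arithmetic in the final paragraph is correct. However, there is a structural gap: the proposition quantifies over a \emph{given, arbitrary} component $m \subseteq \canonical(f)$ (it is fixed in the line ``Let $m \subseteq \canonical(f)$ be a curve'' just before the statement), whereas in both of your cases you \emph{choose} $m$ after the fact --- either as a component missed by $c$, or as a boundary component of an active subsurface $Y$ selected so that $c$ projects to it. That proves only the existential statement ``there is some $m \subseteq \canonical(f)$ within distance two of $c'$,'' not the proposition as written.

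The fix is to reverse the order of choices, which is what the paper does. For the given $m$, take $Y$ to be \emph{any} active subsurface with $m \subseteq \partial Y$; such a $Y$ always exists (either an adjacent pseudo-Anosov piece, or the active annulus $N(m)$ when both sides are periodic). Now run a proof by contradiction: if every vertex of $\gamma$ met $m$, then every vertex would enter $Y$, so the bounded geodesic image theorem would give $d_Y(c, f^M(c)) \leq B$, contradicting the choice of $M$ since $f^M$ restricted to $\calC(Y)$ has large translation length. Hence some $v \in \gamma$ misses $m$, and your isoceles argument takes over. This contradiction framing automatically absorbs the annoying subcase where $c$ (or $f^M(c)$) fails to project to $Y$ --- in that case the ``contradiction hypothesis'' already fails at an endpoint, so the proof short-circuits --- and it also removes the need for the separate $f^M(c) = c$ case entirely. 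One further caution about that case: you justify it by claiming the $\xi!$ factor in $M$ makes $f^M$ the identity on every periodic piece, but the paper never asserts this for $M$ (it introduces the extra factor $N = \zeta! M$ in Section~5 precisely to get that); the claim as stated requires the order of each periodic first-return map to \emph{divide} $\xi!$, which is not clearly justified. Using the subsurface-projection argument throughout sidesteps this issue.
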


\begin{proof}
Let $Y$ be a (possibly annular) active subsurface with $m \subseteq \partial Y$.
If every $v \in \gamma$ meets $m$ then every $v \in \gamma$ meets $Y$.
Therefore, by the bounded geodesic image theorem for $Y$, the projections of $\gamma$ to $\calC(Y)$ are universally bounded.
However this contradicts the choice of $M$.  
Hence there is a curve $v \in \gamma$ disjoint from $m$.

Now $f^M(m) = m$ thanks to our choice of $M$.
It follows that $\dC(v, c)$ and $\dC(v, f^M(c))$ are both at least
\[ \dC(c, m) - 1 \geq \frac{1}{2} \dC(c, f^M(c)) - 1. \]
On the other hand $\dC(c', c)$ and $\dC(c', f^M(c'))$ are both at least
\[ \frac{1}{2} \dC(c, f^M(c)) - \frac{1}{2}. \]
Therefore $\dC(c', v) \leq 1$ and so $\dC(c', m) \leq 2$ as required.
See Figure~\ref{fig:trans_len_reducible}.
\end{proof}

\begin{figure}[ht]
\centering
\begin{tikzpicture}[scale=2,thick]

\node [circle,minimum size=75pt, dotted, fill=lightgray] (canonical) at (0,-0.75) {$\canonical(f)$};

\coordinate (c) at (-1,1);
\coordinate (c2) at (1, 1) {};
\coordinate  (cp) at (0, 0) {};
\coordinate  (v) at (-0.5, 0.15) {};
\coordinate (v2) at (0.5, 0.15) {};

\draw [->, blue] (c) to [out=290,in=135]
	(v) to [out=-45,in=180]
	(cp) to [out=0,in=180+45] (v2) node [above] {$\gamma$};
\draw [blue]
	(v2) to [out=45,in=250]
	(c2);

\node [dot, label=above:{$c$}] at (c) {};
\node [dot, label=above:{$f^M(c)$}] at (c2) {};

\node [dot, label=above right:{$c'$}] at (cp) {};
\node [dot, label=above:{$v$}] at (v) {};
\node [dot, label=right:{$m = f^M(m)$}] (m) at (0, -0.4) {};

\draw [decorate,decoration={brace,amplitude=5pt,mirror,raise=3pt}]
(v) -- node [left,xshift=-6pt,yshift=-4pt] {$\leq 1$} (m);

\draw [decorate,decoration={brace,amplitude=5pt,raise=3pt}]
(v) -- node [above,yshift=6pt] {$\leq 1$} (cp);

\end{tikzpicture}
\caption{The midpoint of $[c, f^M(c)]$ is close to $\canonical(f)$.}
\label{fig:trans_len_reducible}
\end{figure}

Now, as $f$ acts by isometries, we have that
\[ \dC(f^M(c'), m) = \dC(c', m) \leq 2 \]
and so
\[ \dC(c', f^M(c')) \leq 4. \]
It follows that
\[ \left|\frac{\dC(c', f^M(c'))}{M} \right| \leq \frac{4}{M} < \frac{1}{2 D} \]  
and so again if we round $\dC(c', f^M(c')) / M$ to the nearest $1 / D$ then we obtain $0$, which is $\translationlength(f)$.

\subsection{Implementation}
\label{sub:asymp_trans_len_implementation}

In order to be able to do this calculation in polynomial time we must make a reasonable choice of $c$.
We take $c \in \short(\calT_0)$.

This choice of $c$ means that $||c|| \leq \zeta$ and so $||f^M(c)|| \leq \zeta M \cdot ||f||$.
These complexities are small enough that we can compute a geodesic $\gamma$ from $c$ to $f^M(c)$ in $\poly(||f||)$ time.

As above, let $c'$ be a midpoint of $\gamma$.
We then have that $||c'|| \leq \poly(||f||)$ by Remark~\ref{rem:geodesic_bounds} and so $||f^M(c')|| \leq \poly(||f||)$ too.
Once more, these bounds imply that we can compute $\dC(c', f^M(c'))$, and so $\translationlength(f)$, in $\poly(||f||)$ time.
Thus:

\begin{corollary}
Algorithm~\ref{alg:asymptotic_translation_length} computes $\translationlength(f)$ in polynomial time. \qed
\end{corollary}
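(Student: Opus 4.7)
The plan is to verify correctness and polynomial running time separately. Correctness has essentially been established already by the two case analyses in Subsections~4.1 and \ref{sub:asym_trans_len_reducible}: both conclude with the estimate $|\dC(c', f^M(c'))/M - \translationlength(f)| < 1/(2D)$, so that rounding to the nearest multiple of $1/D$ recovers $\translationlength(f)$ exactly. What remains is to confirm that every step of the construction described in Subsection~\ref{sub:asymp_trans_len_implementation} can be carried out in time $\poly(||f||)$.

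First I would dispose of the periodic case by invoking the polynomial-time order algorithm listed in Section~\ref{sub:complexity}; if $f$ is periodic the algorithm returns $0$. Otherwise pick any $c \in \short(\calT_0)$, so that $||c|| \leq \zeta = O(1)$. Since $M = M(S)$ is a constant depending only on the surface, the bullet-list inequalities $||f^M|| \leq M \cdot ||f||$ and $||f^M(c)|| \leq ||f^M|| + ||c||$ give $||f^M(c)|| = O(||f||)$, and $f^M(c)$ itself can be computed in polynomial time via the cited image algorithm. Apply Algorithm~\ref{alg:geodesic} to obtain a geodesic $\gamma$ from $c$ to $f^M(c)$; by the corollary preceding that algorithm this takes $\poly(||c|| + ||f^M(c)||) = \poly(||f||)$ time. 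Let $c'$ be a midpoint of $\gamma$. Remark~\ref{rem:geodesic_bounds} guarantees $||c'|| \leq \poly(||f||)$, hence $||f^M(c')|| \leq \poly(||f||)$ as well, and a single call to Algorithm~\ref{alg:distance} computes $\dC(c', f^M(c'))$ in polynomial time. Finally divide by the constant $M$ and round to the nearest multiple of $1/D$; as $M$ and $D$ depend only on $S$, this final arithmetic costs $O(1)$.

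The only mild subtlety, rather than a genuine obstacle, is that the raw intersection numbers $\intersection(f^M(c), \calT_0)$ can grow exponentially with $||f||$, so every step must work with bit-sizes rather than weights themselves. This is precisely the regime in which Algorithms~\ref{alg:quasiconvex}, \ref{alg:tight_filling_paths}, \ref{alg:geodesic_path} and \ref{alg:geodesic} were designed to operate: each of their running-time guarantees is phrased in terms of the complexity measure $||\cdot||$ introduced in Section~\ref{sub:complexity}. Chaining the polynomial bounds along the five steps above and combining with the correctness verdict from the previous two subsections yields the corollary.
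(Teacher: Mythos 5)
Your proposal is correct and mirrors the paper's argument in Subsection~\ref{sub:asymp_trans_len_implementation} essentially verbatim: choose $c \in \short(\calT_0)$ so that $||c|| = O(1)$, deduce $||f^M(c)|| = O(||f||)$ from the displayed norm inequalities, invoke Algorithm~\ref{alg:geodesic} and Remark~\ref{rem:geodesic_bounds} to bound the midpoint $c'$, then one call to Algorithm~\ref{alg:distance} plus $O(1)$ arithmetic. The correctness step (rounding to the nearest $1/D$ using the two estimates from the non-zero and zero translation cases) also matches the paper exactly.
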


\begin{algorithm}[ht]
\caption{Asymptotic translation length}
\label{alg:asymptotic_translation_length}
\begin{algorithmic}
\Require{A mapping class $f$.}
\Ensure{$\translationlength(f)$.}
	\IIf {$f$ has finite order}
		\Return 0
	\EndIIf
	\State $c \gets$ a curve in $\short(\calT_0)$
	\State $\gamma \gets$ a geodesic from $c$ to $f^M(c)$ \Comment{Use Algorithm~\ref{alg:geodesic}.}
	\State $c' \gets$ a midpoint of $\gamma$
	\State $d \gets \dC(c', f^M(c'))$ \Comment{Use Algorithm~\ref{alg:distance}.}
	\State \Return $d / M$ rounded to the nearest $1 / D$.
\end{algorithmic}
\end{algorithm}

\subsection{Nielsen--Thurston types}

Masur and Minsky's result shows that a mapping class $f$ is pseudo-Anosov if and only if $\translationlength(f) > 0$ \cite[Proposition~4.6]{MasurMinskyI}.
Therefore, by combining Algorithm~\ref{alg:asymptotic_translation_length} with a polynomial-time algorithm for computing the order of a mapping class \cite[Theorem~7.5]{FarbMargalit} \cite{MosherAutomatic}, we can also determine the Nielsen--Thurston type of a mapping class in polynomial time.
(See Algorithm~\ref{alg:nt_type}.)

\begin{corollary}
Algorithm~\ref{alg:nt_type} computes the Nielsen--Thurston type of $f$ in polynomial time. \qed
\end{corollary}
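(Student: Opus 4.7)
The plan is to assemble Algorithm~\ref{alg:nt_type} from three ingredients, all of which are already available, and then check that each ingredient runs in polynomial time in $||f||$.

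First, I would test whether $f$ has finite order. A polynomial-time algorithm for this is cited in the excerpt (\cite[Theorem~7.5]{FarbMargalit}, \cite{MosherAutomatic}). If $f$ has finite order, output \emph{periodic} and halt. Second, assuming $f$ is aperiodic, invoke Algorithm~\ref{alg:asymptotic_translation_length} to compute $\translationlength(f)$; by the preceding corollary this step also runs in $\poly(||f||)$. Third, apply the Masur--Minsky dichotomy \cite[Proposition~4.6]{MasurMinskyI}: an aperiodic $f$ is pseudo-Anosov if and only if $\translationlength(f) > 0$. Thus if the computed value is strictly positive, output \emph{pseudo-Anosov}; otherwise, since $f$ is aperiodic and not pseudo-Anosov, Nielsen--Thurston forces $f$ to be reducible, and we output \emph{reducible}.

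Correctness follows directly from the Nielsen--Thurston trichotomy combined with the Masur--Minsky characterisation, and the output is unambiguous because the three cases are mutually exclusive. For the complexity statement we just sum the complexities of the three calls: the order test is polynomial in $||f||$ by the cited results, Algorithm~\ref{alg:asymptotic_translation_length} is polynomial in $||f||$ by the preceding corollary, and the final sign comparison is trivial because $\translationlength(f)$ has already been returned as an exact rational in lowest terms with denominator dividing $D = D(S)$.

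There is no real obstacle; the entire content has been packaged into the two earlier algorithms. The only point worth emphasising in the write-up is that the \emph{reducible} branch does not require us to exhibit a reducing multicurve: the classification theorem guarantees its existence as soon as we have ruled out periodicity and $\translationlength(f) > 0$. (If a reducing system is desired, one invokes Algorithm~\ref{alg:canonical} separately, which is the content of Theorem~\ref{thrm:canonical}.)
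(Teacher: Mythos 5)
Your proposal is correct and is essentially the same argument as the paper's: combine the polynomial-time order test, Algorithm~\ref{alg:asymptotic_translation_length}, and the Masur--Minsky characterisation that $f$ is pseudo-Anosov if and only if $\translationlength(f) > 0$, then read off the Nielsen--Thurston type by elimination. The only cosmetic difference is the order of the two tests (you check periodicity before computing $\translationlength(f)$, while Algorithm~\ref{alg:nt_type} checks $\translationlength(f) > 0$ first); both orderings are valid since a periodic class has zero asymptotic translation length.
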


\begin{algorithm}[ht]
\caption{Nielsen--Thurston type}
\label{alg:nt_type}
\begin{algorithmic}
\Require{A mapping class $f$.}
\Ensure{The Nielsen--Thurston type of $f$.}
	\IIf {$\translationlength(f) > 0$}
		\Return Pseudo-Anosov
	\EndIIf \Comment{Use Algorithm~\ref{alg:asymptotic_translation_length}.}
	\IIf {$f$ has finite order}
		\Return Periodic
	\EndIIf
	\State \Return Reducible (and aperiodic)
\end{algorithmic}
\end{algorithm}

\begin{remark}
\label{rem:sign_asym_trans_len}
In \cite[Theorem~5.1]{GadreTsai}, Gadre and Tsai showed that if $\translationlength(f) \neq 0$ then $\translationlength(f) > 1 / R$ where $R = R(S) \defeq 18\chi(S)^2 - 30\chi(S) - 10n$.
This made explicit Masur and Minsky's constant $c(S)$ of \cite[Proposition~4.6]{MasurMinskyI}.

Therefore if we only wish to determine whether $\translationlength(f) > 0$ then it is sufficient take
\[ M \defeq \xi! B R. \]
With this value of $M$, by the argument of Section~\ref{sub:asym_trans_len_reducible}, we can then determine whether $f$ is pseudo-Anosov by checking whether
\[ \dC(c', f^M(c')) > 4. \]
Here, as before, $c'$ is a midpoint of $c$ and $f^M(c)$.
\end{remark}

\section{Canonical curve systems}

We finish by discussing an additional consequence of Section~\ref{sub:asym_trans_len_reducible} which allows us to construct invariant multicurves.
We use this in this section to give a polynomial-time algorithm for computing $\canonical(f)$.
(See Algorithm~\ref{alg:canonical}.)

We recall that $\canonical(f)$ is non-trivial if and only if $f$ is reducible and aperiodic \cite[Theorem~4.44]{Kida}.
Hence we will assume throughout that $f$ is reducible and aperiodic as otherwise $\canonical(f) = \emptyset$ automatically.

Now, throughout this section, fix
\[ N = N(S) \defeq \zeta! M, \]
where $M$ is the constant of Section~\ref{sec:asymp_trans_len}.
It will also be sufficient to take $M$ to be the constant of Remark~\ref{rem:sign_asym_trans_len}.

This choice of $N$ ensures that the \emph{canonical pieces} of $f^N$, that is, the mapping classes obtained by crushing $S$ along $\canonical(f)$, are all either pseudo-Anosov or the identity.
Again there is an explicit upper bound for $N$ in terms of $S$.

\subsection{Invariant multicurves}

We start by discussing a polynomial-time procedure for producing a multicurve which is invariant under $f^N$.
(See Algorithm~\ref{alg:invariant}.)

Let $c$ be a curve and $\gamma$ a geodesic between $c$ and $f^N(c)$.
Recall from the proof of Proposition~\ref{prop:reducible_distance} that there is a curve $v \in \gamma$ which is disjoint from a component of $\canonical(f)$.

\begin{proposition}
If $v$ misses some component of $\canonical(f)$ then $v$ and $f^N(v)$ do not fill $S$ and so $m \defeq \partial N(v \cup f^N(v)) \neq \emptyset$.
\end{proposition}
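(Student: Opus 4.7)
The plan is to exhibit an explicit essential simple closed curve that is disjoint from both $v$ and $f^N(v)$, which immediately forces them not to fill and hence forces $\partial N(v \cup f^N(v))$ to have an essential component.

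By hypothesis there is a component $w \subseteq \canonical(f)$ with $\intersection(v, w) = 0$. The choice $N = \zeta! M$ is made precisely so that $f^N = (f^M)^{\zeta!}$ fixes each component of $\canonical(f)$ setwise: by the definition of $M$ in Section~\ref{sec:asymp_trans_len} the mapping class $f^M$ permutes the components of $\canonical(f)$ (in fact it already fixes each of them in that setting), and then the additional factor of $\zeta!$ absorbs any possible permutation among the at most $\zeta$ components. In particular $f^N(w) = w$. Since $f^N$ is a self-homeomorphism of $S$, it preserves geometric intersection number, so
\[ \intersection(f^N(v), w) \;=\; \intersection(f^N(v), f^N(w)) \;=\; \intersection(v, w) \;=\; 0. \]

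Thus $w$ is an essential simple closed curve disjoint from both $v$ and $f^N(v)$. A pair of (multi)curves fills $S$ exactly when every essential simple closed curve meets at least one of them; the existence of $w$ therefore shows that $v$ and $f^N(v)$ do not fill. Equivalently, $S \setminus N(v \cup f^N(v))$ contains a component which is neither a disk nor a once-punctured disk (it contains a regular neighbourhood of $w$), so after discarding inessential and peripheral components the multicurve $\partial N(v \cup f^N(v))$ still has at least one component. Hence $m \neq \emptyset$.

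The only step requiring any care is verifying that $f^N$ fixes each component of $\canonical(f)$ individually; this is immediate from the factorial factor in $N$ combined with the property of $M$ recalled at the start of this section. Everything else is a direct consequence of invariance of intersection number under the mapping class group action and the definition of filling.
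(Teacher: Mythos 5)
Your proof is correct and follows the same reasoning as the paper's: the choice of $N$ ensures $f^N$ fixes each component of $\canonical(f)$, so the component $w$ disjoint from $v$ is also disjoint from $f^N(v)$, which prevents filling. The paper states this in a single sentence; you simply spell out the intersection-number calculation and the resulting nonemptiness of $\partial N(v \cup f^N(v))$ in more detail.
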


\begin{proof}
This follows immediately from the fact that $N$ was chosen so that $f^N$ preserves each component of $\canonical(f)$.
\end{proof}

\begin{proposition}
If $v$ is a curve such that $m \defeq \partial N(v \cup f^N(v)) \neq \emptyset$ then $m$ misses every active subsurface of $f^N$ and so is $f^N$--invariant.
\end{proposition}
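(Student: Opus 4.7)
The plan is to split the proof into two steps: first, show that $m$ has no component entering any active subsurface $Y$ of $f^N$; second, use the Thurston decomposition of $f^N$ to deduce that any such multicurve is automatically $f^N$-invariant. Step two is essentially book-keeping once step one is established: the active subsurfaces (pA pieces plus the annular neighbourhoods of curves in $\canonical(f)$ with periodic adjacent pieces) support the entire nontrivial part of the Thurston decomposition of $f^N$, so $f^N$ can be written as a commuting product of pseudo-Anosov homeomorphisms on the pA pieces and Dehn twists around the cores of the active annuli. Any multicurve disjoint from all their supports is fixed by each factor, hence by $f^N$.

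For step one, fix an active subsurface $Y$. I would argue by cases on whether $v$ enters $Y$. If $v$ is disjoint from $Y$ then, since $f^N(Y)=Y$ (active subsurfaces are $f^N$-invariant, because $f^N$ permutes the pieces of the canonical decomposition and acts nontrivially on each active one), $f^N(v)$ is also disjoint from $Y$, so $v \cup f^N(v)$ and hence $m=\partial N(v\cup f^N(v))$ is disjoint from $Y$. If instead $v$ meets $Y$, the goal is to show that $v \cup f^N(v)$ fills $Y$, which forces every component of $\partial N(v\cup f^N(v))$ in $Y$ to be inessential or peripheral and therefore removed, so $m$ still misses $Y$.

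For the filling claim, I would invoke the large exponent $N = \zeta! M$. In the non-annular case, $f^N|_Y$ is pseudo-Anosov on $Y$, so its asymptotic translation length on $\calC(Y)$ is at least $1/D(Y)$, and since $M$ already dominates the constants $B$, $D$, $\delta$, the subsurface distance $d_{\calC(Y)}(\pi_Y(v), f^N|_Y(\pi_Y(v)))$ exceeds $3$; two curves that far apart in $\calC(Y)$ fill $Y$, so $v \cup f^N(v)$ fills $Y$. In the annular case $Y = N(c)$, the first return map $f^N|_Y$ is a nontrivial power of a Dehn twist $T_c$, and the relative twisting $d_Y(v, f^N(v))$ is correspondingly large, so the arcs $v \cap Y$ and $f^N(v)\cap Y$ together cut $Y$ into disks.

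The main obstacle I expect is keeping the annular case honest: one needs to verify that ``$v$ meets $Y$'' in the annular setting genuinely forces the relative twisting to grow with the twist power, and that $N$ is large enough to make this work uniformly across all active annuli simultaneously. This is really a bookkeeping exercise tracking how $N = \zeta! M$ is built from the constants $D$, $B$, and $\delta$ already used in Section~\ref{sec:asymp_trans_len}, combined with the standard fact that a curve meeting the core of an annulus and its image under even a single Dehn twist already fills the annulus. Once the annular case is in place, both the filling argument and the factorwise invariance argument go through, completing the proposition.
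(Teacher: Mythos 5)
Your argument is correct and matches the paper's approach: both rest on the large translation length of $f^N$ on $\calC(Y)$ for each active $Y$ together with the disjointness of $m$ from $v$ and $f^N(v)$. The paper phrases the key step as a direct contradiction (projecting $v$, $m$, $f^N(v)$ to $\calC(Y)$ and using $m$ as a bridge to bound $\diam_Y$ by four), whereas you route through the equivalent statement that $v \cup f^N(v)$ fills $Y$; these are two sides of the same subsurface-projection coin, and the concluding step on invariance is identical.
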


\begin{proof}
Assume that $m$ meets a (possibly annular) active subsurface $Y$.
As $f^N$ preserves $Y$, it follows that $v$ and $f^N(v)$ must also cut $Y$.
Projecting $v$, $m$ and $f^N(v)$ to $\calC(Y)$ we see that the diameter of $\pi_Y(v) \cup \pi_Y(f^N(v))$ is bounded from above by four.
See \cite[Section~2]{MasurMinskyII} for the definition of $\pi_Y$.
However, $N$ was chosen so that $f^N$ has large translation length on $\calC(Y)$, which is a contradiction.

Now, as $N$ was chosen so that all of the canonical pieces of $f^N$ are all either pseudo-Anosov or the identity, $m$ must be $f^N$--invariant.
\end{proof}

Combining these propositions we obtain that there is a curve $v \in \gamma$ such that $m \defeq \partial N(v \cup f^N(v)) \neq \emptyset$ and such a multicurve is $f^N$--invariant.

As in Section~\ref{sub:asymp_trans_len_implementation}, we again take $c \in \short(\calT_0)$.
Once more, this choice ensures that $||c||$, $||f^N(c)||$, $||v||$, $||f^N(v)||$ and $||m||$ are all bounded above by a polynomial function of $||f||$.
Thus:

\begin{corollary}
Algorithm~\ref{alg:invariant} computes an $f^N$--invariant multicurve in polynomial time. \qed
\end{corollary}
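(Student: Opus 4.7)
The plan is to implement exactly the recipe that the two preceding propositions already validate, and then verify that every step runs within a polynomial complexity budget. The two propositions together guarantee that if $\gamma$ is a geodesic from any curve $c$ to $f^N(c)$, then some vertex $v$ of $\gamma$ satisfies $m \defeq \partial N(v \cup f^N(v)) \neq \emptyset$, and that any such non-empty $m$ is automatically $f^N$--invariant. So all Algorithm~\ref{alg:invariant} has to do is scan the vertices of a geodesic and return the first suitable boundary.

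First I would pick $c \in \short(\calT_0)$, which costs nothing and gives $||c|| \leq \zeta$, a constant. Because $N = N(S)$ depends only on the surface, $||f^N|| \leq N \cdot ||f||$, and therefore $||f^N(c)|| \leq ||f^N|| + ||c|| \leq \poly(||f||)$ by the inequalities collected at the end of Section~\ref{sub:complexity}. Using Algorithm~\ref{alg:geodesic} I compute a geodesic $\gamma = (v_0, v_1, \ldots, v_k)$ from $c$ to $f^N(c)$ in $\calC(S)$; Remark~\ref{rem:geodesic_bounds} bounds each $||v_i|| \leq \poly(||f||)$, and the number of vertices $k+1$ is at most $\dC(c, f^N(c)) + 1$, which is itself polynomial.

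Next I would iterate through the vertices of $\gamma$. For each $v_i$, I compute $f^N(v_i)$ (still of polynomial complexity by $||f^N(v_i)|| \leq ||f^N|| + ||v_i|| \leq \poly(||f||)$) and then the multicurve $m_i \defeq \partial N(v_i \cup f^N(v_i))$. The boundary-of-a-regular-neighbourhood construction runs in polynomial time by \cite[Section~4.3]{BellWebbSimplificationApplications}, and non-emptiness is immediate from the edge vector. By the first proposition of the subsection (applied to the $v$ produced by the argument of Proposition~\ref{prop:reducible_distance}, which guarantees a curve on $\gamma$ disjoint from some component of $\canonical(f)$), at least one $m_i$ is non-empty; by the second proposition, that $m_i$ is $f^N$--invariant. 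Returning the first such $m_i$ gives the required output.

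The only step that even looks like an obstacle is confirming that the polynomial bounds propagate through the entire pipeline, namely that taking the $f^N$--image of a geodesic vertex $v_i$ and then the boundary $\partial N(v_i \cup f^N(v_i))$ does not blow up complexity. This is routine: the three inequalities $||f^N(v_i)|| \leq ||f^N|| + ||v_i||$, $||\partial N(v_i \cup f^N(v_i))|| \leq \max(||v_i||, ||f^N(v_i)||) + 2$, and the $\poly(||f||)$ bound on $||v_i||$ combine to give $||m_i|| \leq \poly(||f||)$. Since the loop runs at most $k+1$ times and each iteration is polynomial, Algorithm~\ref{alg:invariant} terminates in polynomial time with an $f^N$--invariant multicurve, as claimed.
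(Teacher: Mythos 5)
Your proposal is correct and takes essentially the same approach as the paper: combine the two preceding propositions to conclude that some vertex $v$ of $\gamma$ yields a non-empty, $f^N$--invariant $\partial N(v \cup f^N(v))$, then use the choice $c \in \short(\calT_0)$ together with the complexity inequalities and Remark~\ref{rem:geodesic_bounds} to propagate polynomial bounds on $||c||$, $||f^N(c)||$, $||v_i||$, $||f^N(v_i)||$, and $||m_i||$. Your write-up simply spells out the complexity bookkeeping in more explicit detail than the paper's terse version.
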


\begin{algorithm}[ht]
\caption{Invariant multicurve}
\label{alg:invariant}
\begin{algorithmic}
\Require{An aperiodic, reducible mapping class $f$.}
\Ensure{An $f^N$--invariant multicurve $m$.}
	\State $c \gets$ a curve in $\short(\calT_0)$
	\State $\gamma \gets$ a geodesic from $c$ to $f^N(c)$ \Comment{Use Algorithm~\ref{alg:geodesic}.}
	\ForEach {$v \in \gamma$}
		\State $m \gets \partial N(v \cup f^N(v))$
		\IIf {$m \neq \emptyset$}
			\Return $m$
		\EndIIf
	\EndFor
\end{algorithmic}
\end{algorithm}

\begin{remark}
In fact from the proof of Proposition~\ref{prop:reducible_distance} we may assume that $\dC(v, c)$ and $\dC(v, f^N(c))$ are both at least
\[ \frac{1}{2} \dC(c, f^N(c)) - 1. \]
Hence we only need to actually test the middle three curves of $\gamma$ to find a suitable choice of $v$.
\end{remark}

We note that there is a slight improvement that can be made to Algorithm~\ref{alg:invariant}.
Namely, we can crush $S$ along $m$ and repeat the construction for the induced action of $f^N$ on the crushed surface.
This induced action can be computed in polynomial time \cite[Corollary~3.4.4]{BellThesis} and its complexity is at most $||f^N||$ \cite[Proposition~3.4.3]{BellThesis}.
Hence we may assume that $m$ has the maximal number of components possible.

\subsection{Canonical curve system}

We can now describe how to compute $\canonical(f)$.
To achieve this we will focus on computing $\canonical(f^N)$ however this is the same multicurve as $\canonical(f)$.
(See Algorithm~\ref{alg:canonical}.)

We start by using the improved version of Algorithm~\ref{alg:invariant} to obtain a maximal $f^N$--invariant multicurve $m$.
As $m$ is a maximal invariant multicurve it contains $\canonical(f^N)$.
Therefore it only remains to determine which components to discard.

To do this we consider all submulticurves $m' \subseteq m$.
We say that such a submulticurve $m'$ is \emph{pure} if $f^N$ induces periodic and pseudo-Anosov maps on each component of $S$ after we crush along $m'$.
The canonical curve system of $f^N$ is then the pure submulticurve of $m$ with the fewest number of components.

Since $m$ has at most $\xi$ components, there are only at most $2^\xi$ different $m'$ to check.
Once again, as $||m'|| \leq ||m|| \leq \poly(||f||)$, the induced map can be computed in $\poly(||f||)$ time \cite[Corollary~3.4.4]{BellThesis} and its complexity is at most $||f^N||$ \cite[Proposition~3.4.3]{BellThesis}.

\begin{corollary}
Algorithm~\ref{alg:canonical} computes $\canonical(f)$ in polynomial time. \qed
\end{corollary}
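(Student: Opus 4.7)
The plan is to verify correctness of Algorithm~\ref{alg:canonical} and then check that each step runs in polynomial time. For correctness, I would first reduce to computing $\canonical(f^N)$ by noting that $\canonical(f) = \canonical(f^N)$: any reduction system for $f$ is automatically a reduction system for $f^N$ (pseudo-Anosov and periodic pieces remain pseudo-Anosov and periodic under taking $N$-th powers), and conversely any reduction system for $f^N$ is $f$-invariant as a set because the canonical curve system is characterised intrinsically. Taking the minimal reduction system on each side then yields $\canonical(f) = \canonical(f^N)$.

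Next I would verify that the returned multicurve equals $\canonical(f^N)$. The improved Algorithm~\ref{alg:invariant} produces a maximal $f^N$-invariant multicurve $m$ (obtained by iteratively crushing along the invariant multicurves it finds and reapplying the construction; this terminates after at most $\xi$ rounds since each round strictly increases the number of components). By the defining property of the canonical curve system, $\canonical(f^N)$ is contained in every maximal $f^N$-invariant multicurve, and in particular $\canonical(f^N) \subseteq m$. Moreover, $\canonical(f^N)$ is pure by our choice of $N$, and any pure submulticurve $m' \subseteq m$ is a reduction system of $f^N$, hence contains $\canonical(f^N)$ by the minimality characterisation of the canonical curve system. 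Therefore $\canonical(f^N)$ is precisely the pure submulticurve of $m$ with the fewest components, which is what the algorithm returns.

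For the running time, the complexity bounds carried through Section~\ref{sec:asymp_trans_len} give $||m|| \leq \poly(||f||)$ and allow us to compute $m$ in polynomial time. Since $m$ has at most $\xi$ components and $\xi$ is fixed, there are at most $2^\xi = O(1)$ submulticurves $m' \subseteq m$ to test. For each such $m'$, crushing $S$ along $m'$ and computing the induced action of $f^N$ on each complementary piece takes polynomial time by \cite[Corollary~3.4.4]{BellThesis}, with each induced piece having complexity at most $||f^N|| \leq N \cdot ||f||$. Checking whether each piece is periodic or pseudo-Anosov is then done in polynomial time by Algorithm~\ref{alg:nt_type}. The main subtlety I expect to encounter is confirming that the improved Algorithm~\ref{alg:invariant} genuinely produces a maximal invariant multicurve rather than merely some invariant one; but since the iteration strictly increases component count at each step and the number of components of any multicurve on $S$ is bounded by $\xi$, this follows after at most $\xi$ iterations, each of which is polynomial-time.
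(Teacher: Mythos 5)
Your proposal follows essentially the same route as the paper: reduce to computing $\canonical(f^N)$, obtain a maximal $f^N$--invariant multicurve $m$ from the improved Algorithm~\ref{alg:invariant} so that $\canonical(f^N) \subseteq m$, then exhaustively test the $\leq 2^\xi$ submulticurves of $m$ for purity and return the smallest pure one, with the same complexity bookkeeping via \cite[Corollary~3.4.4, Proposition~3.4.3]{BellThesis} and Algorithm~\ref{alg:nt_type}. The only minor slip is the claim that each round of iterative crushing ``strictly increases the number of components'' --- the iteration actually terminates when the induced maps on the pieces are no longer aperiodic and reducible, rather than by a strict monotone increase forced at every round --- but since the component count is bounded by $\xi$ this does not affect the conclusion.
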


\begin{algorithm}[ht]
\caption{Canonical curve system}
\label{alg:canonical}
\begin{algorithmic}
\Require{A mapping class $f$.}
\Ensure{$\canonical(f)$.}
	\IIf {$f$ is pseudo-Anosov or periodic}
		\Return $\emptyset$
	\EndIIf \Comment{Use Algorithm~\ref{alg:nt_type}.}
	\State $m \gets$ a maximal $f^N$--invariant multicurve \Comment{Use Algorithm~\ref{alg:invariant}.}
	\ForEach {$m' \subseteq m$ (ordered by number of components)}
		\IIf {$m'$ is pure}
			\Return $m'$
		\EndIIf \Comment{Use Algorithm~\ref{alg:nt_type}.}
	\EndFor
\end{algorithmic}
\end{algorithm}

\begin{acknowledgements}
The authors are extremely grateful to Saul Schleimer for introducing them to these problems and for his many helpful comments and suggestions.

The first author acknowledges support from U.S. National Science Foundation grants DMS 1107452, 1107263, 1107367 ``RNMS: GEometric structures And Representation varieties'' (the GEAR Network).

The second author is supported by EPSRC Fellowship Reference EP/N019644/1.
\end{acknowledgements}

\bibliographystyle{plain}
\bibliography{bibliography}

\end{document}